\def\@tocline#1#2#3#4#5#6#7{\relax
   \ifnum #1>\c@tocdepth 
   \else
     \par \addpenalty\@secpenalty\addvspace{#2}%
     \begingroup \hyphenpenalty\@M
     \@ifempty{#4}{%
       \@tempdima\csname r@tocindent\number#1\endcsname\relax
     }{%
       \@tempdima#4\relax
     }%
     \parindent\z@ \leftskip#3\relax \advance\leftskip\@tempdima\relax
     \rightskip\@pnumwidth plus4em \parfillskip-\@pnumwidth
     #5\leavevmode\hskip-\@tempdima #6\nobreak\relax
     \ifnum#1<0\hfill\else\dotfill\fi\hbox to\@pnumwidth{\@tocpagenum{#7}}\par
     \nobreak
     \endgroup
   \fi}
\let\oldtocsection=\tocsection
\let\oldtocsubsection=\tocsubsection
\let\oldtocsubsubsection=\tocsubsubsection
\renewcommand{\tocsection}[2]{\hspace{0em}\oldtocsection{#1}{#2}}
\renewcommand{\tocsubsection}[2]{\hspace{3em}\oldtocsubsection{#1}{#2}} 
\renewcommand{\tocsubsubsection}[2]{\hspace{5em}\oldtocsubsubsection{#1}{#2}}
\theoremstyle{plain}
\declaretheorem[numberwithin=section]{theorem}
\declaretheorem[name=Proposition,sibling=theorem]{prop}
\declaretheorem[name=Corollary,sibling=theorem]{cor}
\declaretheorem[sibling=theorem]{lemma}
\def\R{\mathbb{R}}
\def\C{\mathbb{C}}
\def\Z{\mathbb{Z}}
\def\N{\mathbb{N}}
\def\F{\mathcal{F}}
\def\E{\mathbb{E}}
\def\P{\mathbb{P}}
\def\L{{L}}
\def\1{\mathsf{1}}
\def\to{\rightarrow}
\renewcommand{\mod}[1]{\bmod{#1}}
\newcolumntype{C}{>{$}c<{$}}
\newenvironment{rezabib}
  {\bibdiv\biblist\setupbib}
  {\endbiblist\endbibdiv}
\def\setupbib{\catcode`@=\active}
\def\gatherkey#1#2{\gatherkeyaux{#1}#2\gatherkeyaux}
\def\gatherkeyaux#1#2,#3\gatherkeyaux{\bib{#2}{#1}{#3}}
\title[The Distribution of Values of $\frac{L'}{L}(1/2+\epsilon,\chi_D)$]{The Distribution of Values of $\frac{L'}{L}(1/2+\epsilon,\chi_D)$}
\author{Alia Hamieh}
\author{Rory McClenagan}
 \address{Department of Mathematics and Statistics \\
        University of Northern British Columbia \\
        3333 University Way\\
        Prince George, BC V2N4Z9 \\
        Canada}
\email{alia.hamieh@unbc.ca}
\email{mcclenaga@unbc.ca}
\date{\today}
\keywords{\noindent value-distribution, logarithmic derivatives of $L$-functions, quadratic characters}
\subjclass[2010]{11R42, 11M38, 11M41.}
\thanks{Research of the first author is partially supported by NSERC Discovery Grant. Research of the second author was partially supported by NSERC Undergraduate Summer Research Awards}
\begin{document}
\maketitle
\begin{abstract}
    We determine the limiting distribution of the family of values $\frac{L'}{L}(1/2+\epsilon,\chi_D)$ as $D$ varies over fundamental discriminants. 
   Here, $0<\epsilon<\frac12$, and $\chi_D$ is the real character associated with $D$. Moreover, we also establish an upper bound for the rate of convergence of this family to its limiting distribution. As a consequence of this result, we derive an asymptotic bound for the small values of $\left|\frac{L'}{L}(1/2+\epsilon,\chi_D)\right|$.
\end{abstract}

\section{Introduction}\label{sec-introduction}

Many mathematicians have studied the distribution of values of $L$-functions in the critical strip. Some of the earliest results on this topic are due to Bohr-Jessen (\cite{BJ1}, \cite{BJ2}) and Jessen-Wintner \cite{JW}.  These authors obtain the distribution function of $\log\zeta(\sigma+it)$ for a fixed $\sigma>\frac12$ and established several analytic properties of this function. Another influential result on this topic is Selberg's central limit theorem \cite{Selberg} which states that values $\log\zeta(\frac12+it)$  have an approximately two-dimensional Gaussian distribution. Distribution problems for several other families of $L$-functions have been considered from various points of view  over the last 70 years. Consider for example the family of $L$-functions associated with real quadratic characters $\chi_{D}$ where $\chi_{D}(n)$ is the Kronecker symbol $\left(\frac{D}{n}\right)$. Chowla and Erdos \cite{CE} proved that the family $\{L(\sigma,\chi_{D}):D>0,D\equiv0,1\mod4\}$, for a fixed $\sigma>\frac34$, admits a continuous and strictly increasing asymptotic distribution function. Elliott also considered this particular family of $L$-values in a series of papers in the 1970's, thereby improving on the previous body of work. One of Elliott's results in this direction is the following theorem \cite[Theorem~1]{elliott}.

\begin{theorem}
Let $\sigma_0$ be a real number that satisfies $\frac12+(\log\log\log N)^{-\frac12}\leq\sigma_0\leq1$.
There exist distribution functions $F(s,z)$ so that the estimate
\[\frac{1}{\pi(N)}\#\{p\leq N: p\;\text{prime},\; |L(s,\chi_{p})|\leq e^{z}\}=F(s,z)+O((\log\log\log N)^{-2}),\quad\quad\text{as}\;N\to\infty,\] holds uniformly for all $s$ in $R_{N}=\{s:\sigma_0\leq\sigma\leq\frac54,\left|\Im(s)\right|<N^{\frac{1}{13}(2\sigma-1)}\}$ and for all real numbers $z$. For each value of $s$, the function $F(s,z)$ is infinitely differentiable with respect to $z$.
Moreover, the characteristic function $\varphi(s,\tau)$ of $F(s,z)$ has the form 
 $$\varphi(s,\tau)=\prod_{p\;\text{prime}} \frac12\left(\exp\left(-i\tau\left|\log(1+p^{-s})\right|\right)+\exp\left(-i\tau\left|\log(1-p^{-s})\right|\right) \right),$$ and satisfies the bound 
 $\varphi(s,\tau)\ll\exp\left(-\frac{c_1|\tau|^{\frac{1}{\sigma}}}{2\sigma-1}\right)$ for all $\sigma>\frac12.$ 
\end{theorem}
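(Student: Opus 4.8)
The plan is to follow the classical three-step route for value-distribution theorems. \emph{Step 1: truncation.} Choose a slowly growing parameter $Y=Y(N)$. For $\Re(s)=\sigma\ge\sigma_0$ the aim is to show that, outside an exceptional set $\mathcal{E}_N\subset\{p\le N\}$ with $|\mathcal{E}_N|=o(\pi(N))$,
\[
\log|L(s,\chi_p)|=P_Y(s,\chi_p)+o(1),\qquad P_Y(s,\chi_p):=-\sum_{q\le Y}\log\bigl|1-\chi_p(q)q^{-s}\bigr|,
\]
the sum being over primes $q$. For $\sigma>1$ this is immediate from the convergent Euler product together with $\sum_{q>Y}q^{-\sigma}\ll Y^{1-\sigma}/(\sigma-1)$. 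For $\tfrac12<\sigma\le1$ one takes $\mathcal{E}_N$ to be the set of $p$ for which $L(\cdot,\chi_p)$ has a zero with real part $\ge\sigma_0-o(1)$ and imaginary part up to the height relevant to $R_N$; then $|\mathcal{E}_N|=o(\pi(N))$ follows from a zero-density estimate for the family $\{\chi_p\}_{p\le N}$, which itself rests on large-sieve and mean-value inequalities for quadratic $L$-functions. For $p\notin\mathcal{E}_N$ a contour shift (equivalently, a smoothed Perron formula) expresses $\log L(s,\chi_p)$ as a Dirichlet polynomial supported on prime powers up to $Y$, with a negligible remainder; discarding the higher prime powers leaves $P_Y(s,\chi_p)$. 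The constraint $|\Im(s)|<N^{(2\sigma-1)/13}$ in the definition of $R_N$ is precisely what keeps the effective polynomial lengths in the underlying large-sieve inequality inside the conductor range where it still saves a power of $N$, which accounts for the exponent $\tfrac1{13}$.

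\emph{Step 2: equidistribution and factorization.} As $p$ ranges over primes up to $N$, the vector $\bigl(\chi_p(q)\bigr)_{q\le Y}\in\{\pm1\}^{\pi(Y)}$ equidistributes: by quadratic reciprocity, each coordinate $\chi_p(q)$ depends only on the residue of $p$ modulo $8\prod_{q\le Y}q$, so this is the prime number theorem in arithmetic progressions. Siegel--Walfisz suffices here, and since one may take $Y\le\log\log N$ the modulus is at most $(\log N)^{o(1)}$, so the error is a negative power of $\log N$. Conditioning on $\chi_p(q)=+1$ or $-1$ turns the $q$-th local factor of $e^{-i\tau P_Y(s,\chi_p)}$ into one of two exponentials, whence
\[
\frac{1}{\pi(N)}\sum_{p\le N}e^{-i\tau P_Y(s,\chi_p)}=\prod_{q\le Y}\tfrac12\Bigl(e^{i\tau\log|1-q^{-s}|}+e^{i\tau\log|1+q^{-s}|}\Bigr)+\mathrm{error}.
\]
Letting $Y\to\infty$, absolute convergence of $\sum_q q^{-2\sigma}$ shows the product converges to the asserted $\varphi(s,\tau)$ (note $\log|1\pm q^{-s}|=\Re\log(1\pm q^{-s})$). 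Combined with Step 1 this gives $\frac{1}{\pi(N)}\sum_{p\le N}e^{-i\tau\log|L(s,\chi_p)|}=\varphi(s,\tau)+\mathrm{error}$, uniformly for $s\in R_N$.

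\emph{Step 3: continuity theorem, decay of $\varphi$, smoothness of $F$.} An effective Esseen-type smoothing inequality (implemented via Beurling--Selberg majorants) converts the last display into the asserted counting estimate with an error of the shape $\varepsilon_N T+T^{-1}\sup_z F'(s,z)+\int_{|\tau|>T}|\varphi(s,\tau)|\,d\tau$, where $\varepsilon_N$ is the error inherited from Steps 1--2 and $T$ is a Fourier cutoff; optimizing $Y$ and $T$ against $\varepsilon_N$ and against the width $2\sigma-1$ of $R_N$ produces the rate $O((\log\log\log N)^{-2})$. For the tail bound on $\varphi$: each Euler factor has modulus $\bigl|\cos\bigl(\tfrac\tau2\,\eta_q(s)\bigr)\bigr|\le1$, where $\eta_q(s):=\log|1-q^{-s}|-\log|1+q^{-s}|=-2\Re(q^{-s})+O(q^{-3\sigma})$, so $|\eta_q(s)|\asymp q^{-\sigma}$ for $q$ large, away from the sparse set of $q$ with $\Re(q^{-s})$ small. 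On each dyadic block $q\in[Q,2Q]$ with $Q\le c\,|\tau|^{1/\sigma}$ the phase $\tfrac\tau2\eta_q(s)$ has size $\gg1$ and winds through a full period, so a positive proportion of the primes $q$ there give a factor bounded away from $1$; a Diophantine argument (distinct $q^{-s}$ being sufficiently incommensurable, quantified by a Vandermonde-type determinant bound) controls the $\tau$ for which many phases would otherwise cluster near $\pi\Z$. Meanwhile the primes $q>|\tau|^{1/\sigma}$ contribute $\asymp|\tau|^2\sum_{q>|\tau|^{1/\sigma}}q^{-2\sigma}\asymp|\tau|^{1/\sigma}/(2\sigma-1)$ to $-\log|\varphi(s,\tau)|$. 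Combining the two ranges gives $\varphi(s,\tau)\ll\exp\bigl(-c_1|\tau|^{1/\sigma}/(2\sigma-1)\bigr)$, and faster-than-polynomial decay of $\varphi(s,\cdot)$ yields, by Fourier inversion, that $F(s,z)$ is $C^\infty$ in $z$.

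The principal obstacle is Step 1 in the regime where $\sigma$ is only $(\log\log\log N)^{-1/2}$ away from $\tfrac12$: making the short-Euler-product approximation to $\log L(s,\chi_p)$ simultaneously effective and uniform over every $p\le N$ and over the full box $R_N$ forces one through the zero-density and large-sieve machinery for the quadratic family, together with the delicate parameter bookkeeping that fixes both the exponent $\tfrac1{13}$ and the final $(\log\log\log N)^{-2}$ rate. A secondary difficulty is the Diophantine input required to make the decay bound for $\varphi$ hold for every $\tau$ rather than merely for generic $\tau$.
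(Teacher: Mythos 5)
The theorem you set out to prove is not proved in this paper: it is Elliott's Theorem~1 from his 1973 \emph{Inventiones} paper, quoted in the introduction purely as historical background. The paper's own result, Theorem~\ref{thm-main}, concerns $\frac{L'}{L}(\frac12+\epsilon,\chi_D)$ over \emph{all} fundamental discriminants (not $|L(s,\chi_p)|$ over prime discriminants) and is proved by a different route: the moments $\E_N\big(\1_{\mathcal{E}(N)^c}\mathsf{L}_{\epsilon,N}^k\big)$ are matched, outside a zero-density exceptional set, against the moments $\E\big(\L_\epsilon^k\big)$ of a probabilistic model built from the Granville--Soundararajan variables $X_p$, and the moments-based Berry--Esseen inequality of Proposition~\ref{prop-quantitative_moments} converts the uniform moment comparison into a discrepancy bound. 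So there is no in-paper proof against which to check your sketch.

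As a reconstruction of Elliott's argument, your outline does have the right three-part architecture: short Euler product outside a zero-density exceptional set, equidistribution of $(\chi_p(q))_{q\le Y}$ via quadratic reciprocity and the prime number theorem in progressions, then an Esseen smoothing argument together with decay of $\varphi$. Two concrete issues. First, for the distribution of $\log|L(s,\chi_p)|$ your Step~2 naturally produces Euler factors in $\varphi$ carrying $\log|1\pm q^{-s}|$ (the logarithm of a modulus), whereas the statement as quoted has $|\log(1\pm p^{-s})|$ (the modulus of a complex logarithm); these differ for non-real $s$, and you need to reconcile your formula with the asserted one before invoking the continuity theorem. Second, and more seriously, the decay bound $\varphi(s,\tau)\ll\exp\big(-c_1|\tau|^{1/\sigma}/(2\sigma-1)\big)$ already follows from the primes $q>|\tau|^{1/\sigma}$ alone, where the phase $\frac{\tau}{2}\eta_q(s)$ is uniformly small and $|\cos x|\le 1-cx^2$ gives $-\log|\varphi(s,\tau)|\gg\tau^2\sum_{q>|\tau|^{1/\sigma}}q^{-2\sigma}$; the Diophantine incommensurability argument you invoke for the small primes is unnecessary for that bound and, as sketched, is a genuine gap --- making such a winding/incommensurability claim hold for \emph{every} $\tau$ rather than for generic $\tau$ is precisely the kind of assertion that requires an actual proof.
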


In an important paper \cite{GS}, Granville and Soundararajan studied the distribution of large values of  $L(1,\chi_D)$ as $D$ varies over all fundamental discriminants. One of their results implies that the proportion of fundamental discriminants $D$ with $|D|\leq x$ such that $L(1,\chi_D)\geq e^{\gamma}\tau$ decays doubly exponentially in $\tau=\log\log x$. 
In \cite{GS}, the authors  compare the distribution of the values of $ L(1,\chi_D)$ with the distribution of the probabilistic model $L(1,X)$ for some carefully chosen random variable $X$.

The idea of comparing the distribution of values of $L(1,\chi_D)$ to a random model  precedes \cite{GS}. For instance, it appears in the work of Elliott  \cite{elliott1,elliott} where he reduced the problem to a probability problem concerning sums of independent random variables on a finite probability space.

Lamzouri explored this line of research even further and established a framework for studying the distribution of large values of various families of $L$-functions inside the critical strip (see for example \cite{lamzouri1} and \cite{lamzouri2}). In \cite{lamzouri3}, Lamzouri studied the distribution of large values of $\frac{L'}{L}(1,\chi_D)$. These values have great arithmetic significance as they are directly related to the values of the Euler-Kronecker constants of the quadratic fields $\mathbb{Q}(\sqrt{D})$. In fact, the distribution of values of logarithmic derivatives of Dirichlet $L$-functions in the critical strip was initiated by Ihara and Matsumoto  (see for example \cite{Ihara1}, \cite{Ihara2}, \cite{I-M} and \cite{I-M1}).  Their approach, however, does not follow the probabilistic framework employed in  \cite{GS}, \cite{HM}, \cite{lamzouri1}, \cite{lamzouri2} and \cite{lamzouri3} among other papers. Instead, it is based on classical results such as L\'{e}vy's continuity theorem and Jessen-Wintner theory of infinite convolutions of distribution functions.

Following the method employed in \cite{I-M1}, Mourtada and Murty proved the following result (see \cite[Theorem 2]{M-M}).
\begin{theorem}
\label{MM-theorem}
Let $\sigma>\frac12$, and assume the GRH. Let $\mathcal{F}(N)$ denote the set of fundamental discriminants in the interval $[-N, N]$. Then, there exists a probability density function $M_\sigma$, such that 
    \[\lim _{N\rightarrow\infty} \frac{1}{|\mathcal{F}(N)|} \left|\left\{ D\in \mathcal{F}(N); ~ \frac{L'}{L}(\sigma,\chi_D)\leq z\right\}\right|= \int_{-\infty}^{z} M_\sigma(t) dt.\]
Moreover, the characteristic function $\varphi_{F_\sigma}(y)$ of the asymptotic distribution function $F_\sigma(z)=  \int_{-\infty}^{z} M_\sigma(t) dt$  is given by 
    \[\varphi_{F_\sigma}(y)= \prod_{p}  \left(\frac{1}{p+1}+ \frac{p}{2(p+1)}\exp{\left (  -\frac{iy\log{p}}{p^\sigma-1} \right)}+ \frac{p}{2(p+1)} \exp{\left ( \frac{iy\log{p}}{p^\sigma+1}  \right)} \right).\]
\end{theorem}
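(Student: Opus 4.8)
The plan is to follow the Ihara--Matsumoto and Mourtada--Murty paradigm: approximate $\tfrac{L'}{L}(\sigma,\chi_D)$ by a finite sum over small primes, show that the quantities $\chi_D(p)$ decorrelate as $D$ ranges over $\mathcal F(N)$, identify the limit with the law of an explicit random Euler sum whose characteristic function is the stated product, and finally read off the density $M_\sigma$ from the decay of that characteristic function. The first step is to prove, under GRH, an approximation of the form
\[
\frac{L'}{L}(\sigma,\chi_D)\;=\;-\sum_{p\le x}\frac{\chi_D(p)\log p}{p^{\sigma}-\chi_D(p)}\;+\;E(D,x),
\]
for a parameter $x=x(N)\to\infty$ that grows slowly with $N$, where $E(D,x)$ is bounded \emph{uniformly} over $D\in\mathcal F(N)$, say by $O\bigl(x^{1/2-\sigma}(\log xN)^{2}\bigr)$. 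This comes from the truncated Perron/explicit-formula expansion of $-\tfrac{L'}{L}(s,\chi_D)=\sum_n\Lambda(n)\chi_D(n)n^{-s}$: shifting the contour past $\Re s=\tfrac12$ produces a sum over nontrivial zeros that GRH renders negligible, after which the prime-power terms $p^{k}\le x$ with $k\ge2$ are resummed into the Euler factors $(p^{\sigma}-\chi_D(p))^{-1}$ up to an absolutely convergent tail. This is where GRH is genuinely used.

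Next, with $x$ fixed, I would compute the averaged characteristic function $\frac{1}{|\mathcal F(N)|}\sum_{D\in\mathcal F(N)}\exp\bigl(-iy\sum_{p\le x}\chi_D(p)\log p/(p^{\sigma}-\chi_D(p))\bigr)$ by expanding the finite product over $p\le x$ and using the standard evaluation of the character sums $\sum_{D\in\mathcal F(N)}\chi_D(m)$: such a sum is $o(|\mathcal F(N)|)$ unless $m$ is a perfect square, and its main term encodes the local densities $\mathbb P(\chi_D(p)=0)=\tfrac1{p+1}$ (forced by the squarefree condition $p^{2}\nmid D$) together with $\mathbb P(\chi_D(p)=\pm1)=\tfrac{p}{2(p+1)}$. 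Hence the $(\chi_D(p))_{p\le x}$ become jointly independent in the limit $N\to\infty$, and the average tends to $\prod_{p\le x}\bigl(\tfrac1{p+1}+\tfrac{p}{2(p+1)}\exp(-iy\log p/(p^{\sigma}-1))+\tfrac{p}{2(p+1)}\exp(iy\log p/(p^{\sigma}+1))\bigr)$. A Taylor expansion shows the $p$-th factor equals $1+O_{y}(\log p\,p^{-2\sigma})$ — the two linear terms combine to size $\log p\,p^{-2\sigma}$ — so for $\sigma>\tfrac12$ the product over all primes converges to the asserted $\varphi_{F_\sigma}(y)$ and the tail $\prod_{p>x}$ is $1+o_x(1)$ uniformly in $N$. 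Feeding this back into the first step through an $\varepsilon/3$ argument (first $N\to\infty$ with $x$ fixed, then $x\to\infty$) gives $\frac{1}{|\mathcal F(N)|}\sum_{D}e^{iy\frac{L'}{L}(\sigma,\chi_D)}\to\varphi_{F_\sigma}(y)$ for every real $y$. Since $\varphi_{F_\sigma}$ is continuous at $0$ and is exactly the characteristic function of the random series $\sum_{p}Y_{p}$ with $Y_{p}=-\epsilon_{p}\log p/(p^{\sigma}-\epsilon_{p})$ and the $\epsilon_{p}$ independent with the marginals above (the series converges a.s.\ and in $L^{2}$ by Kolmogorov's theorem, as $\sum_{p}\var(\epsilon_{p})(\log p)^{2}p^{-2\sigma}<\infty$ when $\sigma>\tfrac12$), Lévy's continuity theorem gives convergence in law of $\tfrac{L'}{L}(\sigma,\chi_D)$ to the distribution $F_\sigma$ of $\sum_p Y_p$.

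To upgrade $F_\sigma$ to an absolutely continuous measure I would bound $|\varphi_{F_\sigma}(y)|$ directly from the product: every factor has modulus $\le1$, and for each prime $p$ with $\tfrac14\le|y|\log p\,p^{-\sigma}\le\tfrac12$ the corresponding factor has modulus $\le1-c$ for an absolute $c>0$; the number of such primes is $\gg|y|^{1/\sigma}(\log|y|)^{-1}$, so one obtains rapid decay, of the shape $\varphi_{F_\sigma}(y)\ll_{\sigma}\exp(-c'|y|^{1/\sigma})$ for $|y|$ large. In particular $\varphi_{F_\sigma}$ and all products of it with polynomials lie in $L^{1}(\mathbb R)$, so Fourier inversion yields a density $M_\sigma(t)=\tfrac1{2\pi}\int_{\mathbb R}e^{-ity}\varphi_{F_\sigma}(y)\,dy$ that is moreover $C^{\infty}$, which completes the argument.

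\textbf{Main obstacle.} The delicate point is the first step. One must obtain the short-sum approximation with an error that is small uniformly over the entire family $\mathcal F(N)$ (this is precisely where GRH enters, via the zero-sum in the explicit formula), while simultaneously keeping $x$ small enough that the character-sum asymptotics of the second step hold with a genuinely negligible error term; concretely, one has to arrange that the two limits $N\to\infty$ and $x\to\infty$ can be performed in that order, which is exactly what the uniform tail estimate $\prod_{p>x}(\cdots)=1+o_x(1)$ secures. Everything else is either classical probability (Lévy's theorem, Kolmogorov's criterion for $\sum_p Y_p$) or standard analytic number theory (the explicit formula under GRH, evaluation of quadratic character sums, Taylor expansion of the Euler factors).
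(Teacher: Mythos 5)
You are attempting to prove Theorem~\ref{MM-theorem}, which the present paper does not actually prove: it is cited verbatim from Mourtada--Murty (Theorem~2 of \cite{M-M}), and the authors then go on to establish a \emph{stronger} unconditional statement, Theorem~\ref{thm-main}, with an explicit discrepancy bound. Your proposal essentially reconstructs the route attributed in Section~\ref{sec-introduction} to Ihara--Matsumoto and Mourtada--Murty: a GRH-conditional short-sum approximation, direct computation of the averaged characteristic function $\frac{1}{|\mathcal F(N)|}\sum_D e^{iy\,\frac{L'}{L}(\sigma,\chi_D)}$ via quadratic character-sum estimates, and L\'evy's continuity theorem, followed by Fourier inversion once the decay $\varphi_{F_\sigma}(y)\ll\exp(-c|y|^{1/\sigma})$ is in hand. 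That is a genuinely different route from the one this paper takes for its strengthening: the paper replaces GRH by the unconditional Heath--Brown zero-density estimate (Lemma~\ref{result-zero_density}), works with the short Dirichlet polynomial only outside a small exceptional set $\mathcal E(N)$ (Proposition~\ref{prop-short_dirichlet_polynomial} and \eqref{eq-size_bad_D}), and replaces L\'evy's theorem by the method of moments packaged into the Berry--Esseen-type Proposition~\ref{prop-quantitative_moments}. The L\'evy route gives the qualitative theorem and the characteristic-function formula cleanly but does not, by itself, produce an effective rate; the moment method is what yields the $\big(\log\log N/\log N\big)^{1/2+\epsilon}$ bound in Theorem~\ref{thm-main}.

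One point in your sketch deserves more care than you give it, and it is located exactly where you put your "Main obstacle." Your $\varepsilon/3$ argument as phrased (``first $N\to\infty$ with $x$ fixed, then $x\to\infty$'') does not close directly, because the GRH error $E(D,x)$ is \emph{not} small for fixed $x$ as $N\to\infty$: the zero sum in the explicit formula contributes a quantity of size roughly $x^{1/2-\sigma}\log N$ (the $\log N$ coming from the density of low-lying zeros, conductor $|D|\le N$), which blows up for fixed $x$. One must either take $x=x(N)\to\infty$ with $N$ — in which case the naive expansion $\prod_{p\le x}\big(a_0(p)+a_1(p)\chi_D(p)+a_2(p)\chi_D(p)^2\big)$ has on the order of $3^{\pi(x)}$ terms and the worst $m$ appearing in $\sum_D\chi_D(m)$ is of size $e^{2\vartheta(x)}$, and balancing these two constraints against $E(D,x)=o(1)$ is genuinely delicate for $\sigma$ close to $\frac12$ — or one must replace the uniform bound on $E(D,x)$ by a mean-square estimate over $D$, showing that the tail $\sum_{p>x}$ of the Euler sum has small second moment over $\mathcal F(N)$, so that $|E(D,x)|$ is small outside a negligible set of discriminants. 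The latter is, in spirit, what both the Ihara--Matsumoto machinery and the present paper's exceptional-set device accomplish; labelling it ``standard analytic number theory'' underweights the fact that it is the crux of the argument. The remaining parts of your sketch — the Kolmogorov/Menshov--Rademacher convergence of the random Euler sum, the identification of its characteristic function as the stated product, and the decay estimate that gives a $C^\infty$ density by Fourier inversion — are correct and match what the paper records in \eqref{eq-L_epsilon-definition-1}, \eqref{eq-char_function_definition}, and Proposition~\ref{prop-char_function_decay}.
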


The purpose of this paper is to revisit this problem and strengthen Theorem \ref{MM-theorem} by removing the dependence on the GRH and providing an explicit error term. To this end, we follow the approach of Lamzouri \cite{lamzouri3} and employ some ideas from Lamzouri, Lester, and Radziwi{\l\l}  \cite{LLR} to compare the distribution of $\frac{L'}{L}(\sigma,\chi_D)$ to that of a probabilistic random model constructed using the independent random variables introduced in \cite{GS} (see \eqref{eqn:random-model} below).

Following the typographical convention in \cite{kowalski}, we will use sans-serif fonts, such as $\mathsf{X}$, to denote arithmetic random variables, and more standard fonts, such as $X$, for abstract random variables. Using the same letter will usually indicate that the random variable $X$ is a model of the arithmetic quantity $\mathsf{X}$.

Define $\mathcal{F}(N)$ as the set of fundamental discriminants $D$ with $|D| \leq N$, and set
\begin{align}\label{eqn:arithmetic-model}\mathsf{X}_{n,N}: 
  \mathcal{F}(N)& \to \{-1,0,1\}, \\
   D &\mapsto \chi_D(n).\nonumber
  \end{align}

Let $\{X_p\}_{p\;\text{prime}}$ be the sequence of independent random variables given by
	\begin{equation}\label{eqn:random-model}\P \big( X_p = a \big) = \begin{cases}
        \frac{p}{2(p+1)} & \text{if $a= \pm 1$}, \\
        \frac{1}{p+1} & \text{if $a=0$}.
    \end{cases}\end{equation}
Furthermore, for any positive integer $n$, define   $X_n = \prod_{p \mid n} X_p ^{\nu_p(n)}$, where $\nu_p(n)$ is the $p$-adic valuation of $n$.
The random variables $X_n$ satisfy
\begin{equation}\label{eq-E(X(n))}
    \E \big( X_n \big) = 
    \begin{cases}
        \prod_{p \mid n} \Big( \frac{p}{p+1} \Big) &\text{if $n$ is a square} \\
        0 &\text{otherwise}.
    \end{cases}
\end{equation}

The sequence $X=\{X_{n}\}_{n\in\mathbb{N}}$ was first introduced in \cite{GS} for the purpose of studying the distribution of the extreme values  of  $L(1,\chi_D)$ as $D$ varies over all fundamental discriminants.

For an odd prime $p$, consider $\chi_D(p)$ for $D \in \Z/ p^2 \Z$. Since $D$ is a fundamental discriminant, we know that the residue class corresponding to $p^2$ is not contained in $\F(N)$. For the remaining $p^2 - 1$ residue classes $\chi_D(p) = 0$ for $p-1$ of them (whenever $D$ is a multiple of $p$). The values $-1$ and $1$ on the other hand should occur equally often amongst the remaining $p^2 - p$ residue classes. 

This suggests that the random model $X$ should be a good model for the arithmetic sequence $\{\mathsf{X}_{n,N}\}_{n\in\mathbb{N}}$. In fact, one can prove that for all $k\in\N$, we have
\begin{equation}\label{eq-X_N_convergence}\lim_{N\to\infty}\E_N\big[ \mathsf{X}_{n,N}^k \big] = \E \big[ X_n^k \big],\end{equation}
where $\E_N\big[ \mathsf{X}_{n,N}^k \big]=\displaystyle{\frac{1}{\mathcal{F}(N)}\sum_{D\in\mathcal{F}(N)}X_{n,N}^k(D)}$. This follows from Lemma \ref{lemma-bridge} in Section \ref{sec-preliminary_lemmas} using the complete multiplicativity of $X_n$ and $X_{n,N}$. 
 
 Fix $\epsilon$ with $0<\epsilon<\frac12$. The objects of interest in this paper are the values $\frac{L'}{L}(1/2+\epsilon,\chi_D)$ as $D$ varies over fundamental discriminants.    Formally, we want to analyze the limiting distribution as $N \to \infty$ of the arithmetic random variables
    \begin{align*}\mathsf{L}_{\epsilon,N} :         \mathcal{F}(N) &\to \R, \\
        D& \mapsto \frac{L'}{L}\left( 1/2 + \epsilon,\chi_D\right).
    \end{align*}

For $\Re(s)>1$, we have 
   \begin{equation}\label{eq-absolute}\frac{L'}{L} \big( s, \chi_D \big) = \sum_{n=1}^\infty \frac{\Lambda(n)} {n^s} \chi_D(n)= \sum_{n=1}^\infty \frac{\Lambda(n)} {n^s} \mathsf{X}_{n,N}(D).\end{equation}

In view of \eqref{eq-X_N_convergence} and \eqref{eq-absolute},   
we introduce the abstract $\R$-valued random variable 
\begin{equation}\label{eq-L_epsilon-definition}
    \L_\epsilon = \sum_{n=1}^\infty \frac{\Lambda(n)}{n^{\frac12+\epsilon}}X_n.
\end{equation}

Using the orthogonality relation \eqref{eq-E(X(n))} and applying the Menshov-Rademacher theorem \cite[Theorem~B.10.5 ]{kowalski}, we see that the random series on the right hand side of \eqref{eq-L_epsilon-definition} is almost surely convergent, and thus $\L_\epsilon$ is well-defined. 

More generally, let $\tau>\frac12$, and let $U_{\tau}=\{s\in\mathbb{C}:\Re(s)>\tau\}$. It follows from the Menshov-Rademacher theorem that the random series \begin{equation}\label{eq-series1}\sum_{n=1}^\infty \frac{\Lambda(n)}{n^{s}}X_n\end{equation} is almost surely convergent on $U_{\tau}$, and so it defines a holomorphic function there. We also consider the random series \begin{equation}\label{eq-L_epsilon_Euler} \sum_p \frac{(\log p) X_p}{p^{s} -X_p} ,\end{equation}
which, by Kolmogorov's  theorem \cite[Theorem~B.10.1]{kowalski}, is almost surely convergent  on $U_{\tau}$, and so it defines a holomorphic function there. One could easily verify that the series \eqref{eq-series1} and \eqref{eq-L_epsilon_Euler}  are equal for all $s$ with $\Re(s)>1$. By analytic continuation, we see that
\begin{equation}\label{eqn:two-series}\sum_{n=1}^\infty \frac{\Lambda(n)}{n^{s}}X_n=\sum_p \frac{(\log p) X_p}{p^{s} -X_p}\end{equation}
almost surely in $U_{\tau}$. 
In particular, we have 
\begin{equation}\label{eq-L_epsilon-definition-1}
    \L_\epsilon = \sum_{n=1}^\infty \frac{\Lambda(n)}{n^{\frac12+\epsilon}}X_n=\sum_p \frac{(\log p) X_p}{p^{\frac12+\epsilon} -X_p}.
\end{equation}

Consider the distribution functions
    \[F_{\epsilon,N}(z) = \P_N \big( \mathsf{L}_{\epsilon,N} \leq z \big):= \frac{1}{|\mathcal{F}(N)|}\left|\left\{D\in\mathcal{F}(N): \frac{L'}{L}\left( 1/2+ \epsilon,\chi_D\right)\leq z\right\}\right|,\] and \[F_\epsilon(z) = \P \big(\L_\epsilon \leq z \big),\] for all $z \in \R$. Recall that $\mathsf{L}_{\epsilon,N}$ is said to converge in distribution to $\L_\epsilon$ if
	\begin{equation}\label{eqn:conv-dist}\lim_{N \to \infty} F_{\epsilon,N}(z) = F_\epsilon(z),\end{equation}
for every continuity point $z$ of $F_{\epsilon}$. Not only are we interested in establishing \eqref{eqn:conv-dist}, but we are also interested in determining how well the distribution  of $\L_\epsilon$ approximates that of $\mathsf{L}_{\epsilon,N}$. More precisely, the main result of this paper is the following theorem.
\begin{theorem}\label{thm-main}
Fix $0 < \epsilon < \frac12$. Then $F_{\epsilon,N}$ converges in distribution to $F_\epsilon$ which possesses a smooth density function. The characteristic function of $F_{\epsilon}$ has the form
\begin{equation}\label{eq-char_function_definition}
    \varphi_{F_{\epsilon}}(\tau) =
    \E ( \exp(i \tau  \L_\epsilon ))= \prod_{p}\Bigg(\frac{1}{p+1} + \frac{p}{2(p+1)} \bigg[ \exp\bigg(-i \tau\frac{\log p}{p^{\frac12+\epsilon}-1}\bigg) + \exp\bigg( i \tau\frac{\log p}{p^{\frac12+\epsilon}+1}\bigg) \bigg] \Bigg).
\end{equation}
Furthermore, as $N\to\infty$, we have
    \[ \| F_{\epsilon,N} - F_\epsilon \|_{\infty} \ll_\epsilon \bigg( \frac{\log \log N}{\log N} \bigg)^{\frac12+\epsilon}.\]
\end{theorem}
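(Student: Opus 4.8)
The plan is to prove Theorem~\ref{thm-main} by a three-layer truncation-and-comparison argument, following the strategy of Lamzouri \cite{lamzouri3} together with the moment-matching input \eqref{eq-X_N_convergence}. The backbone is the use of a smoothed Beurling--Selberg / Esseen-type inequality: for any distribution functions $F,G$ with characteristic functions $\varphi_F,\varphi_G$ and any $T>0$,
\[
\|F-G\|_\infty \ll \int_{-T}^{T}\frac{|\varphi_F(\tau)-\varphi_G(\tau)|}{|\tau|}\,d\tau + \frac{1}{T}\sup_z \text{(density bound)},
\]
so the error term \((\log\log N/\log N)^{1/2+\epsilon}\) will ultimately come from a judicious choice of the parameter \(T\) (polynomial in \(\log N / \log\log N\)) balanced against (i) the tail decay of \(\varphi_{F_\epsilon}\), which must be shown to be rapid enough that \(\frac1T\)-type contributions are negligible, and (ii) the rate at which the arithmetic characteristic function \(\varphi_{F_{\epsilon,N}}\) matches \(\varphi_{F_\epsilon}\) on \(|\tau|\le T\).

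First I would introduce a truncation parameter \(y=y(N)\) (to be chosen as a small power of \(\log N\)) and set \(\mathsf L_{\epsilon,N}^{(y)}=\sum_{n\le y}\frac{\Lambda(n)}{n^{1/2+\epsilon}}\mathsf X_{n,N}\) and its random-model analogue \(\L_\epsilon^{(y)}=\sum_{n\le y}\frac{\Lambda(n)}{n^{1/2+\epsilon}}X_n\). The comparison then splits into three estimates. (1) \textbf{Tail of the random model:} using \eqref{eq-L_epsilon-definition-1}, the orthogonality \eqref{eq-E(X(n))}, and a second-moment (or higher-moment) computation, show \(\E|\L_\epsilon-\L_\epsilon^{(y)}|^2\) is small, hence \(F_\epsilon^{(y)}\) is close to \(F_\epsilon\) in the Lévy/Kolmogorov sense up to an error controlled by \(y\). (2) \textbf{Tail of the arithmetic family:} this is the analytic heart. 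Here I expect to use a Plancherel-type / mean-value estimate for \(\sum_{D\in\F(N)}\big|\frac{L'}{L}(1/2+\epsilon,\chi_D)-\mathsf L_{\epsilon,N}^{(y)}(D)\big|^2\); unconditionally (no GRH), one controls \(\frac{L'}{L}\) away from the line \(\Re s=1\) via a zero-density estimate for the family of quadratic \(L\)-functions (à la Lamzouri--Lester--Radziwi\l\l \cite{LLR}), quantifying the measure of ``bad'' discriminants whose \(L\)-function has a zero near \(1/2+\epsilon\) and showing this exceptional set has density \(\ll (\log N)^{-A}\). (3) \textbf{Moment matching on the truncation:} for the truncated (finite) sums, compare \(\varphi_{F_{\epsilon,N}^{(y)}}\) with \(\varphi_{F_\epsilon^{(y)}}\) on \(|\tau|\le T\) by expanding \(\exp(i\tau\mathsf L^{(y)}_{\epsilon,N})\) in a power series and invoking \eqref{eq-X_N_convergence} together with Lemma~\ref{lemma-bridge}; the Pólya--Vinogradov / large-sieve bound for character sums over fundamental discriminants gives \(|\E_N[\mathsf X_{n,N}]-\E[X_n]|\ll n^{?}/\sqrt N\) for non-squares, so the matching error is \(\ll T^{?}\,y^{?}/\sqrt N\), which is negligible for \(y,T\) powers of \(\log N\).

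The smoothness of the density of \(F_\epsilon\) (and hence the fact that \(F_\epsilon\) has no atoms, legitimizing the \(\|\cdot\|_\infty\) statement rather than merely pointwise convergence) I would deduce from rapid decay of \(\varphi_{F_\epsilon}(\tau)\) as \(|\tau|\to\infty\): from the product formula \eqref{eq-char_function_definition} one estimates each local factor \(\big|\tfrac{1}{p+1}+\tfrac{p}{2(p+1)}(e^{-i\tau a_p}+e^{i\tau b_p})\big|\le 1-c\min(1,(\tau\log p/p^{1/2+\epsilon})^2)\) for small primes and shows the product over \(p\) in a suitable range (roughly \(p\le |\tau|^{1/(1/2+\epsilon)}\)) decays like \(\exp(-c|\tau|^{1/(1/2+\epsilon)}/\log|\tau|)\); this more than suffices for \(\varphi_{F_\epsilon}\in L^1\) and for all derivatives, giving a \(C^\infty\) density via Fourier inversion. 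Assembling the three pieces with \(y\asymp (\log N/\log\log N)^{c}\) and \(T\asymp (\log N/\log\log N)^{1/2+\epsilon}\), the dominant term is the random-model tail in step (1) together with the \(1/T\) term in Esseen's inequality, each of size \((\log\log N/\log N)^{1/2+\epsilon}\).

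The main obstacle I anticipate is step (2): obtaining an \emph{unconditional}, quantitatively explicit bound on the measure of fundamental discriminants for which \(\frac{L'}{L}(1/2+\epsilon,\chi_D)\) is not well-approximated by its short Dirichlet truncation. This requires a careful unconditional zero-density input for the quadratic family and a contour-shift argument expressing \(\frac{L'}{L}\) in terms of a short sum plus a contribution from nearby zeros; controlling the zero contribution in mean-square over \(D\in\F(N)\) — rather than pointwise — is exactly where the methods of \cite{LLR} replace the GRH used in \cite{M-M}, and tracking the constants there to land the clean exponent \(\tfrac12+\epsilon\) in the final error term is the delicate part.
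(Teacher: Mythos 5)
Your proposal correctly identifies the main ingredients — an Esseen-type inequality, orthogonality of the random model $\{X_n\}$, the zero-density input à la \cite{LLR}, Lemma~\ref{lemma-bridge} as the bridge between $\E_N$ and $\E$, and the decay of $\varphi_{F_\epsilon}$ to get a $C^\infty$ density — and you correctly locate the hard part in controlling $\frac{L'}{L}$ unconditionally near $\Re s=\tfrac12$. But the architecture you propose differs from the paper's in two structurally important ways, and in both cases the paper's choice makes the argument substantially cleaner.

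First, the paper does not truncate the random variable $\L_\epsilon$ at all, and it does not compare characteristic functions of truncated sums term-by-term. Instead, it proves a Berry--Esseen inequality phrased entirely in terms of the moment discrepancy $m(N,k)\gg |\E_N(Y_N^k)-\E(Y^k)|^{1/k}$ (Proposition~\ref{prop-quantitative_moments}): once one has a bound on $m(N,k)$ uniform in $k$ together with the growth bound $\E(|Y|^k)^{1/k}=o(k)$ and integrability of $\varphi_F$, the discrepancy $\|F_N-F\|_\infty\ll 1/M(N)$ follows formally. The three analytic inputs feeding this are Theorem~\ref{thm-moment_relation} (moment matching), Proposition~\ref{prop-random_moment_bound} ($\E(|\L_\epsilon|^k)^{1/k}\ll k^{1/2-\epsilon}$), and Proposition~\ref{prop-char_function_decay}; there is no separate step comparing $\varphi_{F_{\epsilon,N}^{(y)}}$ to $\varphi_{F_\epsilon^{(y)}}$ on a window $|\tau|\le T$. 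This moment-based reformulation is the paper's key technical device and you do not anticipate it; your step (3), which expands $\exp(i\tau \mathsf L^{(y)}_{\epsilon,N})$ in a power series, is morally an inlined version of this but is harder to run because you have to control the coupling between the $\tau$-range $T$ and the truncation length $y$ by hand.

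Second, and more consequentially, your truncation length $y\asymp(\log N/\log\log N)^{c}$ is too short for the argument the paper actually runs, and your plan to handle the arithmetic tail via a Plancherel/mean-square estimate is not what is done. The paper's Dirichlet polynomial has length $\lambda = N^{\alpha}$ with $\alpha>0$ a fixed constant depending only on $\epsilon$, and the approximation of $(\frac{L'}{L}(\frac12+\epsilon,\chi_D))^k$ by $\sum_{n\le\lambda}\Lambda_k(n)\chi_D(n)n^{-1/2-\epsilon}$ is \emph{pointwise} for all $D$ outside an exceptional set $\mathcal E(N)$ of size $O(N^{1-c})$ — a genuine power saving coming from Heath-Brown's zero-density estimate (Lemma~\ref{result-zero_density}), not a $(\log N)^{-A}$ saving as you anticipate. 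The pointwise Perron/contour-shift argument (Proposition~\ref{prop-short_dirichlet_polynomial}) produces error terms of the shape $(C\log N)^{k+1}\lambda^{-d}$, and it is precisely because $\lambda$ is a power of $N$ that such errors remain negligible uniformly in $k$; with a poly-$\log$ truncation the $(\log N)^{k+1}$ factor is not killed and the $k$-uniformity needed by Proposition~\ref{prop-quantitative_moments} breaks. The mean-square estimate you sketch for step (2) is a different (and in the unconditional setting, much harder) route: you would need a Selberg-type second moment for $\frac{L'}{L}$ at a fixed point inside the critical strip averaged over the quadratic family, which is not something the zero-density estimate delivers directly. In short: your plan needs to be corrected by (i) lengthening the truncation to a power of $N$, (ii) replacing the mean-square bound by the pointwise Perron argument outside the zero-density exceptional set, and (iii) ideally adopting the moment-discrepancy formulation so that the parameter balancing happens once, in Proposition~\ref{prop-quantitative_moments}, rather than per-$\tau$. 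The final assembly in the paper also requires one further ingredient you omit: a Markov-inequality tail bound (Lemma~\ref{lem-tail_bound}) showing that $|\mathsf L_{\epsilon,N}|\ge(\log N/\log\log N)^{1/2-\epsilon}$ only on a set of density $\exp(-B\log N/\log\log N)$, which is needed so that the discrepancy between $F_{\epsilon,N}$ and the distribution of the truncated variable $\1_{\mathcal E_\star(N)^c}\mathsf L_{\epsilon,N}$ is negligible.
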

Using this theorem, we derive the following asymptotic bound for the small values of $\left|\frac{L'}{L}(1/2+\epsilon,\chi_D)\right|$. 
\begin{cor}
Let $\displaystyle{m_{N}=\min_{D\in\mathcal{F}(N)}\left(\left|\frac{L'}{L}(1/2+\epsilon,\chi_D)\right|\right)}$. As $N\to\infty$, we have 
\[m_{N}\ll \left(\frac{\log\log N}{\log N}\right)^{\frac12+\epsilon}.\]
\end{cor}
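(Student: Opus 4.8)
The plan is to feed the rate of convergence from Theorem~\ref{thm-main} into the fact that $F_\epsilon$ has strictly positive slope at the origin. Write $\eta_N=\big(\tfrac{\log\log N}{\log N}\big)^{\frac12+\epsilon}$, let $f_\epsilon$ be the smooth density of $F_\epsilon$, and set $G(\delta)=F_\epsilon(\delta)-F_\epsilon(-\delta)=\int_{-\delta}^{\delta}f_\epsilon$. The crucial analytic input is the strict inequality $f_\epsilon(0)>0$; granting it, continuity of $f_\epsilon$ gives $G(\delta)/\delta\to 2f_\epsilon(0)$ as $\delta\to0^{+}$, so $G(\delta)\ge f_\epsilon(0)\,\delta$ for all sufficiently small $\delta>0$.

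Assuming this, the corollary follows quickly. By Theorem~\ref{thm-main} fix $C=C(\epsilon)$ and $N_0$ with $\|F_{\epsilon,N}-F_\epsilon\|_\infty\le C\eta_N$ for $N\ge N_0$, choose $A=A(\epsilon)$ with $Af_\epsilon(0)>2C$, and put $\delta_N=A\eta_N$. For $N$ large, $\delta_N$ is small enough that $G(\delta_N)\ge f_\epsilon(0)\delta_N>2C\eta_N$, whence
\[
\P_N\big(-\delta_N<\mathsf{L}_{\epsilon,N}\le\delta_N\big)=F_{\epsilon,N}(\delta_N)-F_{\epsilon,N}(-\delta_N)\ge G(\delta_N)-2\|F_{\epsilon,N}-F_\epsilon\|_\infty>0 .
\]
Therefore some $D\in\mathcal{F}(N)$ satisfies $\big|\tfrac{L'}{L}(1/2+\epsilon,\chi_D)\big|<\delta_N$, i.e.\ $m_N<A\eta_N$; since $m_N$ is finite for each $N$, this yields $m_N\ll_\epsilon\eta_N$.

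So the real work is proving $f_\epsilon(0)>0$. I would split $\L_\epsilon$ into two independent absolutely continuous pieces. Partition the primes as $\mathcal P_1\sqcup\mathcal P_2$ with each $\mathcal P_i$ infinite of positive relative density, and put $Y_i=\sum_{p\in\mathcal P_i}\tfrac{(\log p)X_p}{p^{1/2+\epsilon}-X_p}$, so that $\L_\epsilon=Y_1+Y_2$ with $Y_1,Y_2$ independent. Re-running the argument behind Theorem~\ref{thm-main} with $\prod_{p\in\mathcal P_i}$ in place of $\prod_p$ (the estimates change only by constants, since $\mathcal P_i$ retains a positive proportion of the primes) shows that each $Y_i$ has a bounded continuous density $\alpha_i$. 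Next, since $p^{-1/2-\epsilon}\log p\to0$ while $\sum_{p\in\mathcal P_i}p^{-1/2-\epsilon}\log p=\infty$, the elementary fact that the finite subset sums of a null, non-summable sequence are dense in $[0,\infty)$ — applied to the admissible values $0$ and $\pm\tfrac{\log p}{p^{1/2+\epsilon}\mp1}$ of each summand — gives $\operatorname{supp}(Y_i)=\R$, so that $U_i:=\{\alpha_i>0\}$ is open and dense in $\R$. Finally $f_\epsilon=\alpha_1*\alpha_2$, hence
\[
f_\epsilon(0)=\int_{\R}\alpha_1(t)\,\alpha_2(-t)\,dt ,
\]
and since $U_1$ and $-U_2$ are open dense subsets of $\R$ their intersection contains an interval; the integrand is then positive on a set of positive Lebesgue measure, so the integral is strictly positive.

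The main obstacle is exactly this positivity. Theorem~\ref{thm-main} delivers only smoothness of $f_\epsilon$, and a smooth nonnegative density can vanish at an interior point of its support, so the full-support statement alone — which is all the subset-sum argument produces directly — does not suffice. Writing $\L_\epsilon$ as a sum of two independent absolutely continuous variables is the device that upgrades "positive near $0$'' to "positive at $0$'', via the Baire-category remark about $U_1\cap(-U_2)$. The remaining ingredients — the subset-sum density lemma, the perturbation of the characteristic-function estimates to a sub-product over $\mathcal P_i$, and the transfer of the convergence rate to a bound on $m_N$ — are routine.
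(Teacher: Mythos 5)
Your overall strategy coincides with the paper's: feed the discrepancy bound from Theorem~\ref{thm-main} into the strict positivity $f_\epsilon(0)>0$ to force the event $|\mathsf{L}_{\epsilon,N}|\le\delta_N$ to have positive probability when $\delta_N\asymp\eta_N$, and your bookkeeping (choosing $A$ with $Af_\epsilon(0)>2C$, comparing $G(\delta_N)$ with $2\|F_{\epsilon,N}-F_\epsilon\|_\infty$) is fine. Where you genuinely diverge is the proof of $f_\epsilon(0)>0$. The paper disposes of this in one line by citing Kowalski's Proposition~B.10.8 applied to the almost-surely convergent random Euler product $\sum_p (\log p)X_p/(p^{1/2+\epsilon}-X_p)$. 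You instead re-prove it: split $\L_\epsilon=Y_1+Y_2$ over two positive-density sets of primes, note that the decay estimate of Proposition~\ref{prop-char_function_decay} survives restriction to each $\mathcal P_i$ so that each $Y_i$ has a bounded continuous density $\alpha_i$, use the null–non-summable subset-sum lemma on the admissible jump sizes $\pm\log p/(p^{1/2+\epsilon}\mp1)$ to get $\operatorname{supp}(Y_i)=\R$ (hence $\{\alpha_i>0\}$ open and dense), and then read off $f_\epsilon(0)=\int_\R\alpha_1(t)\alpha_2(-t)\,dt>0$ because $U_1\cap(-U_2)$ contains an interval. This is a correct and self-contained alternative, and your remark that full support alone does not suffice (a smooth density can vanish at an interior point of its support) — so that the independent two-piece convolution is precisely what upgrades "positive near $0$" to "positive at $0$" — identifies the real point at issue. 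The paper's route is shorter at the cost of a black-box citation; yours is longer but makes the mechanism visible, and in effect re-derives the content of the cited proposition in this special case.
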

\begin{proof}

Let $\eta=\eta(N)$ be a positive parameter which will be chosen so that $\eta(N)\to0$ as $N\to\infty$. Let \[\Psi_N(\eta)=\left|\left\{D\in\mathcal{F}(N):\left|\frac{L'}{L}(1/2+\epsilon,\chi_{D})\right|\leq \eta\right\}\right|.\] By Theorem \ref{thm-main}, we have \[\frac{\Psi_N(\eta)}{|\mathcal{F}(N)|}= \P\left(\L_{\epsilon}\in\left[-\eta,\eta\right]\right)+O\left( \left( \frac{\log \log N}{\log N} \right)^{\frac12+\epsilon}\right).\]
Let $f_{\epsilon}(x)$ be the smooth density function associated with $F_{\epsilon}$. By \cite[Proposition~B.10.8]{kowalski} applied to the random series $\sum_p \frac{(\log p) X_p}{p^{\frac12+\epsilon} -X_p}$, we know that $f_{\epsilon}(0)>0$. It follows that

\[\P\left(\L_{\epsilon}\in\left[-\eta,\eta\right]\right)=\int_{-\eta}^{\eta}f_{\epsilon}(x)\;dx\gg \eta.\] Choosing $\eta=C\left( \frac{\log \log N}{\log N} \right)^{\frac12+\epsilon}$ for some large enough positive constant $C$ yields 
  \[\frac{\Psi_N(\eta)}{|\mathcal{F}(N)|}\gg \left( \frac{\log \log N}{\log N} \right)^{\frac12+\epsilon}. \] Hence, we get $m_{N}\ll \left(\frac{\log\log N}{\log N}\right)^{\frac12+\epsilon}$ as desired.
  \end{proof}

The corollary above is an analogue of \cite[Theorem~1.1]{LL} where the authors investigate the small values of $\left|\frac{L'}{L}(1,\chi)\right|$ for non-principal Dirichlet characters $\chi$ modulo $q$, as $q\to\infty$ over the primes.

\noindent{\bf Organization.} The structure of the paper is as follows. In Section \ref{sec-quanitative_moments}, we  prove Proposition \ref{prop-quantitative_moments} which provides a version of Berry-Esseen inequality based on the method of moments. In Section \ref{sec-method_of_proof}, we show how we use Proposition \ref{prop-quantitative_moments} to deduce Theorem \ref{thm-main} from two key results; namely, Theorem \ref{thm-moment_relation} and Proposition \ref{prop-random_moment_bound}. The former is a result relating the moments of the random model $\L_{\epsilon}$ and the arithmetic model $\mathsf{L}_{\epsilon, N}$. The latter is a decay bound on the moments of the random model $\L_{\epsilon}$. In Section \ref{sec-preliminary_lemmas}, we collect some key lemmas, allowing for streamlined proofs of these two key results. In Section \ref{sec-moment_relation}, we prove Theorem \ref{thm-moment_relation}. In Section \ref{sec-moment_decay}, we prove Proposition \ref{prop-random_moment_bound}.

\noindent{\bf Conventions and Notation.} \begin{itemize}\item Given two functions $f(x)$ and $g(x)$, we shall interchangeably use the notation  $f(x)=O(g(x))$ and $f(x) \ll g(x)$  to mean there exists $M >0$ such that $|f(x)| \le M |g(x)|$ for all sufficiently large $x$. 
We write $f(x) \asymp g(x)$ to mean that the estimates $f(x) \ll g(x)$ and $g(x) \ll f(x)$ hold simultaneously. 
\item Throughout the paper $\epsilon$ denotes a fixed positive constant with $0<\epsilon<\frac12$. 
\item The letter $p$ will always be used to denote a prime number. 
\item The capital letter $F$ is used for distribution functions and the characteristic function of a distribution function $F$ is denoted by $\varphi_F$. 
\item We denote by $\mathcal{F}(N)$ the set of all fundamental discriminants $D$ with $|D|\leq N$. 
\item For a subset $\mathcal{A}(N)$ of $\mathcal{F}(N)$, we set  $\P_{N}(\mathcal{A}(N))=\frac{|\mathcal{A}(N)|}{|\mathcal{F}(N)|}$. 
\item For an arithmetic random variable $\mathsf{Y}_{N}$ on $\mathcal{F}(N)$, we denote by $\E_{N}(\mathsf{Y}_{N})$ the average \[\frac{1}{|\mathcal{F}(N)|}\sum_{D\in\mathcal{F}(N)}\mathsf{Y}_{N}(D).\] We also use the notation $\E_{N}(\mathsf{1}_{\mathcal{A}(N)^{c}}\mathsf{Y}_{N})$ to denote the average $\frac{1}{|\mathcal{F}(N)|}\sum_{D\in\mathcal{F}(N)\setminus\mathcal{A}(N)}\mathsf{Y}_{N}(D)$.
\end{itemize}

\noindent{\bf Acknowledgements.} The authors would like to thank Amir Akbary and Edward Dobrowolski for helpful comments and discussions related to this work.

\section{Berry-Esseen Inequality}\label{sec-quanitative_moments}

There are two main tools used to prove convergence in distribution. The first tool is L\'{e}vy's continuity theorem, which relates convergence in distribution of a given sequence of distribution functions to point-wise convergence of the corresponding characteristic functions. The second tool is the method of moments which instead relies on proving the convergence of all of the integral moments of the random variables. Both of these methods are non-quantitative in their original forms. However, with some additional assumptions, we can reformulate both of these results in a quantitative format. For instance, we have the following effective analogue of L\'{e}vy's continuity theorem  (see \cite[page~431]{T}).
\begin{prop}\label{prop-Berry-Esseen}
Let $\{Y_N\}_{N=1}^\infty$ and $Y$ be real-valued random variables. Let $F_N$ and $F$ denote the corresponding distribution functions, and let $\varphi_{F_N}$ and $\varphi_{F}$ denote the corresponding characteristic functions. Suppose that $F$ is absolutely continuous with bounded density. Then we have
\begin{equation}\label{eq-Berry-Esseen}
    \|F_N - F\|_\infty \ll \frac{1}{T(N)} + \int_{-T(N)}^{T(N)} \bigg| \frac{\varphi_{F_N}(\tau) - \varphi_F(\tau)}{\tau} \bigg| d\tau,
\end{equation}
for any $T(N) \to \infty$. 
\end{prop}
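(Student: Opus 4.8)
The plan is to recognize \eqref{eq-Berry-Esseen} as the classical Esseen smoothing inequality, specialized to the situation in which one of the two distributions — here $F$ — is absolutely continuous with density bounded by a \emph{fixed} constant $A$ (namely $A = \|f\|_\infty$, where $f$ is the density of $F$). Abbreviate $T = T(N)$ and $\delta(\tau) = \varphi_{F_N}(\tau) - \varphi_F(\tau)$. I will prove $\|F_N - F\|_\infty \ll A/T + \int_{-T}^{T} |\delta(\tau)/\tau|\, d\tau$ for \emph{every} $T > 0$, which yields \eqref{eq-Berry-Esseen} once the fixed constant $A$ is absorbed into the implied constant (the hypothesis $T(N) \to \infty$ being used only to force $1/T(N) \to 0$). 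Since $\delta(0) = \varphi_{F_N}(0) - \varphi_F(0) = 1 - 1 = 0$, the integrand $\delta(\tau)/\tau$ has a removable singularity at the origin; moreover, if $\int_{-T}^{T} |\delta(\tau)/\tau|\, d\tau = \infty$ there is nothing to prove, so we may assume $\delta(\tau)/\tau \in L^1([-T,T])$.

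Write $H = F_N - F$ and $\Delta = \sup_x |H(x)|$; if $\Delta = 0$ there is nothing to prove, so assume $\Delta > 0$. One of $\sup_x H(x)$ and $\sup_x(-H(x))$ equals $\Delta$; I treat the case $\sup_x H(x) = \Delta$, the other being entirely analogous with the roles of the two distributions and the sign of the displacement reversed. Choose $x_0$ with $H(x_0) > \tfrac34\Delta$. Since $F_N$ is nondecreasing and $|F(x) - F(x')| \le A|x - x'|$, we get $H(x) \ge H(x_0) - A(x - x_0)$ for all $x \ge x_0$; hence, with $h := \Delta/(4A)$, we have $H(x) \ge \tfrac12\Delta$ throughout the interval $[x_0, x_0 + h]$.

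Next I would smooth $H$ against the Fej\'er kernel $k_T(x) = (1 - \cos Tx)/(\pi T x^2)$, which is nonnegative, has $\int_{\R} k_T = 1$, has Fourier transform $\widehat{k_T}(\tau) = (1 - |\tau|/T)_{+}$ supported in $[-T,T]$, and satisfies $\int_{|x|>u} k_T(x)\, dx \ll 1/(Tu)$. Evaluating the convolution at $y := x_0 + h/2$ and splitting $\int_{\R} H(y-x)\, k_T(x)\, dx$ over $|x| \le h/2$ (where $y - x \in [x_0, x_0 + h]$, so that $H(y-x) \ge \tfrac12\Delta$) and $|x| > h/2$ (where $H(y-x) \ge -\Delta$), the tail bound gives
\[
(H * k_T)(y) \;\ge\; \tfrac12\Delta - \tfrac32\Delta \int_{|x|>h/2} k_T(x)\, dx \;\ge\; \tfrac12\Delta - c\,\frac{A}{T}
\]
for an absolute constant $c$. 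For the matching upper bound, set $g_T := (dH) * k_T$, an $L^1$ function whose Fourier transform $\delta(\tau)\,\widehat{k_T}(\tau)$ is continuous, supported in $[-T,T]$, and vanishing at $\tau = 0$ (as $dH = dF_N - dF$ has total mass $0$); then $(H * k_T)(y) = \int_{-\infty}^{y} g_T(u)\, du$ (an antiderivative of $g_T$ vanishing at $-\infty$), and expanding $g_T$ by Fourier inversion, truncating the outer integral to $[-M, y]$, interchanging the integrals over that bounded rectangle, and letting $M \to \infty$ — the stray term dying by the Riemann--Lebesgue lemma since $\delta(\tau)/\tau \in L^1([-T,T])$ — yields
\[
(H * k_T)(y) = -\frac{1}{2\pi}\int_{-T}^{T} \frac{\delta(\tau)\,\widehat{k_T}(\tau)}{i\tau}\, e^{-i\tau y}\, d\tau ,
\]
so that $|(H * k_T)(y)| \le \frac{1}{2\pi}\int_{-T}^{T} |\delta(\tau)/\tau|\, d\tau$ since $|\widehat{k_T}| \le 1$. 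Comparing the two displays at $y = x_0 + h/2$ gives $\Delta \ll A/T + \int_{-T}^{T} |\delta(\tau)/\tau|\, d\tau$, which (together with the analogous case) is precisely $\|F_N - F\|_\infty \ll A/T + \int_{-T}^{T} |\delta(\tau)/\tau|\, d\tau$.

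I expect the only delicate step to be the Fourier-inversion identity for $(H * k_T)(y)$ in the last paragraph: because $H$ is bounded but not integrable it cannot be transformed directly, so the interchange of integrals has to be justified through the truncation-and-limit procedure indicated, with $\varphi_{F_N}(0) = \varphi_F(0) = 1$ being exactly what renders the factor $1/\tau$ harmless at the origin. Everything else is routine — the reduction in the first paragraph, the monotonicity-and-Lipschitz estimate in the second, and the splitting of the kernel integral together with the tracking of the absolute constant in the third.
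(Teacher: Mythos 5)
The paper offers no proof of this proposition: it simply cites Tenenbaum's \emph{Introduction to Analytic and Probabilistic Number Theory} (p.~431), where the result appears as the classical Esseen smoothing (Berry--Esseen) inequality. Your argument is a correct and complete reconstruction of exactly that standard proof --- smoothing $F_N-F$ against the Fej\'er kernel, using monotonicity of $F_N$ together with the Lipschitz bound on $F$ to produce an interval where $|H|\geq\Delta/2$, lower-bounding the convolution there, and upper-bounding it via Fourier inversion of $(dF_N-dF)*k_T$ with the truncation-and-Riemann--Lebesgue step to justify the interchange --- so it matches the cited source in method; the only cosmetic remark is that the bound you establish holds for all $T>0$, with $T(N)\to\infty$ in the statement serving merely to make the first term vanish.
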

This type of result, which uses effective point-wise convergence of characteristic functions to attain effective convergence in distribution, is sometimes referred to as a \textit{Berry-Esseen theorem} (although some authors reserve this term for the specific case in which the limiting distribution is normal). In \cite{LLR}, the authors utilized this approach  effectively  in combination with Beurling-Selberg functions to obtain an improved upper bound on the discrepancy between the distribution of $\zeta(s)$ on the line $\Re(s)=\frac12+\epsilon$ and that of  its random model (See \cite[Theorem ~1.1]{LLR}). 

The following proposition is a version of \eqref{eq-Berry-Esseen} based on the method of moments, and it provides the main probabilistic tool which allows us to attain the discrepancy bound in Theorem \ref{thm-main} 

\begin{prop}\label{prop-quantitative_moments}
Let $\{Y_N\}_{N=1}^\infty$ and $Y$ be real-valued random variables for which all moments exist and satisfy
    \[\E \big( |Y_N|^k \big)^{1/k}, \: \E \big( |Y|^k \big)^{1/k} = o(k), \quad \text{as $k \to \infty$}.\]
Let $F_N$ and $F$ denote the distribution functions of $Y_N$ and $Y$ respectively and suppose that $\varphi_{F}$ is absolutely integrable. Let $m(N,k)$ be some positive function such that uniformly for $k \in \Z^+$
    \[ | \E(Y_N^k) - \E(Y^k)| ^{1/k} \ll m(N,k), \quad \text{as $N \to \infty$}.\]
Suppose that there exists some function $M(N) \to \infty$ such that uniformly for $k > \log M(N)$, we have
\begin{equation}\label{eq-M(n)_construction}
    \frac{m(N,k)}{k} \ll \frac{1}{M(N)}, \quad \text{as $N \to \infty$},
\end{equation}
and uniformly for $k \leq \log M(N)$, we have
\begin{equation}\label{eq-M(n)_condition}
   M(N) \ll \log \frac{1}{m(N,k)^k}, \quad \text{as $N \to \infty$}.
\end{equation}
 Then $\{Y_N\}_{N}$ converges to  $Y$ in distribution, $F$ has a smooth density function, and 
    \[\|F_N - F\|_\infty \ll \frac{1}{M(N)}.\]
\end{prop}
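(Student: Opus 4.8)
The plan is to apply the quantitative L\'evy inequality of Proposition~\ref{prop-Berry-Esseen} with $T=T(N)$ taken proportional to $M(N)$, bounding the characteristic functions by comparing their Taylor series coefficient by coefficient. First I would verify the hypotheses of Proposition~\ref{prop-Berry-Esseen}: since $\varphi_F\in L^1(\R)$, Fourier inversion shows $F$ is absolutely continuous with bounded continuous density, so the bound
\[
\|F_N-F\|_\infty\ll\frac1T+\int_{-T}^{T}\Bigl|\frac{\varphi_{F_N}(\tau)-\varphi_F(\tau)}{\tau}\Bigr|\,d\tau
\]
is available for any $T=T(N)\to\infty$. Next, the moment growth condition $\E(|Y_N|^k)^{1/k},\E(|Y|^k)^{1/k}=o(k)$ makes $\E(|Y|^k)/k!$ and $\E(|Y_N|^k)/k!$ decay faster than any geometric sequence (via Stirling), so $\varphi_F(\tau)=\sum_{k\ge0}(i\tau)^k\E(Y^k)/k!$ and the analogous series for $\varphi_{F_N}$ converge for every $\tau\in\R$. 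Subtracting, the $k=0$ terms cancel, and after absorbing the (uniform in $k$) implied constant from the moment comparison into $m(N,k)$ at the cost of changing the constants in \eqref{eq-M(n)_construction}--\eqref{eq-M(n)_condition}, we obtain
\[
\Bigl|\frac{\varphi_{F_N}(\tau)-\varphi_F(\tau)}{\tau}\Bigr|\le\sum_{k\ge1}\frac{|\tau|^{k-1}}{k!}\bigl|\E(Y_N^k)-\E(Y^k)\bigr|\ll\sum_{k\ge1}\frac{m(N,k)^k\,|\tau|^{k-1}}{k!}.
\]

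Integrating termwise over $[-T,T]$ gives $\int_{-T}^{T}|\cdots|\,d\tau\ll\sum_{k\ge1}\frac{(m(N,k)T)^k}{k\cdot k!}$, and I would then set $T=T(N)=c_0M(N)$ for a small absolute constant $c_0>0$ and split the sum at $k=\log M(N)$. For $k\le\log M(N)$, hypothesis \eqref{eq-M(n)_condition} gives $m(N,k)^k\le e^{-cM(N)}$, so these terms contribute at most $e^{-cM(N)}\sum_{k\ge1}T^k/k!\le e^{(c_0-c)M(N)}$, which is $\ll1/M(N)$ as soon as $c_0<c$. For $k>\log M(N)$, hypothesis \eqref{eq-M(n)_construction} gives $m(N,k)\ll k/M(N)$, hence $m(N,k)T\ll c_0k$; invoking $k!\ge(k/e)^k$, these terms contribute $\ll\sum_{k>\log M(N)}(Cc_0e)^k/k$, which for $c_0$ small enough that $Cc_0e\le e^{-1}$ is a geometric tail of size $\ll\frac{1}{M(N)\log M(N)}$. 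Combining the two halves with $1/T=(c_0M(N))^{-1}$ yields $\|F_N-F\|_\infty\ll1/M(N)$, and in particular $F_N\to F$ in distribution.

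It remains to record that $F$ has a smooth density. Writing $\epsilon_k:=\E(|Y|^k)^{1/k}/k\to0$, the elementary bound $|\varphi_F(\tau)|\le\E(|Y|^k)/|\tau|^k=(\epsilon_kk/|\tau|)^k$, optimized by choosing $k$ of size $\asymp|\tau|$, yields $\varphi_F(\tau)\ll e^{-c'|\tau|}$ for large $|\tau|$; thus $\tau^j\varphi_F(\tau)\in L^1(\R)$ for every $j\ge0$, and repeated differentiation under the Fourier inversion integral produces a $C^\infty$ density for $F$.

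The only delicate point is the balancing in the choice of $T(N)$: the two hypotheses \eqref{eq-M(n)_construction}--\eqref{eq-M(n)_condition} are calibrated precisely so that both pieces of the split are $O(1/M(N))$, and one must notice that the large-$k$ piece needs genuine geometric decay of the summands, not mere boundedness, which is what forces $c_0$ to be chosen sufficiently small relative to the constants furnished by those two hypotheses. Checking that the implied constant in the moment comparison can be uniformly absorbed before splitting is the other bookkeeping point to watch.
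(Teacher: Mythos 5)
Your proof of the discrepancy bound is correct and follows essentially the same route as the paper: apply Proposition~\ref{prop-Berry-Esseen}, expand $\varphi_{F_N}-\varphi_F$ as a moment power series, integrate term by term, take $T(N)$ a small constant multiple of $M(N)$, and split the resulting sum at $k=\log M(N)$, controlling the small-$k$ piece via \eqref{eq-M(n)_condition} and the large-$k$ piece via \eqref{eq-M(n)_construction} together with $k!\ge(k/e)^k$. The bookkeeping with the constants ($c_0$ small against the implied constants from the two hypotheses) is the same balancing act the paper performs, and your observation that the large-$k$ tail requires genuine geometric decay rather than boundedness of the ratio is exactly the point.

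Where the argument has a real gap is in the last paragraph, on smoothness of the density. The claimed bound $|\varphi_F(\tau)|\le\E(|Y|^k)/|\tau|^k$ is not a valid inequality. Markov's inequality gives $\P(|Y|\ge|\tau|)\le\E(|Y|^k)/|\tau|^k$, but there is no analogous pointwise bound for the characteristic function: $\varphi_F(\tau)=\E(e^{i\tau Y})$ is an oscillatory integral, not a tail probability, and one cannot deduce its size from a moment. A concrete counterexample: if $Y$ is uniform on $[-1,1]$ then $\E(|Y|^k)=1/(k+1)$, so $\E(|Y|^k)^{1/k}\to1=o(k)$, yet $\varphi_F(\tau)=\sin\tau/\tau$ decays only like $1/|\tau|$ and for fixed $\tau$ the proposed right-hand side $\E(|Y|^k)/|\tau|^k\to0$ as $k\to\infty$, violating the inequality. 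The moment condition $\E(|Y|^k)^{1/k}=o(k)$ is equivalent to $Y$ having an entire moment generating function, but this does not imply exponential decay of $\varphi_F$: a compactly supported $C^1$ density (which is in $L^1$ after Fourier transform) satisfies both stated hypotheses but is not $C^\infty$. So the chain "moment growth $\Rightarrow$ $\varphi_F(\tau)\ll e^{-c'|\tau|}$ $\Rightarrow$ smooth density" collapses at the first arrow. Note that the paper's own proof also does not derive the smoothness claim from the stated hypotheses; in the application, smoothness of $F_\epsilon$ in fact comes from the explicit sub-exponential decay of $\varphi_{F_\epsilon}$ supplied separately by Proposition~\ref{prop-char_function_decay}, not from the moment hypothesis of the present proposition. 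You should either import that decay as an additional hypothesis or drop the attempted derivation of smoothness and cite the characteristic-function decay directly where it is needed.
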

\begin{proof}
 By Proposition \ref{prop-Berry-Esseen}, for any $T(N) \to \infty$ we have 
    \[\|F_N - F\|_\infty \ll \frac{1}{T(N)} + \int_{-T(N)}^{T(N)} \bigg| \frac{\varphi_{F_N}(\tau) - \varphi_F(\tau)}{\tau} \bigg| d\tau.\]

Recall that   $\varphi_{F_{N}}(\tau)= \E ( \exp(i \tau Y_N ))$ and $ \varphi_{F}(\tau)=\E ( \exp(i \tau Y ))$. 
We have
\begin{align*}
    \int_{-T(N)}^{T(N)} \bigg| \frac{\varphi_{F_N}(\tau) - \varphi_F(\tau)}{\tau} \bigg| d\tau &= \int_{-T(N)}^{T(N)} \bigg| \sum_{k=1}^\infty \frac{\E(Y_N^k) - \E(Y^k)}{k!} (i\tau)^{k-1} \bigg| d\tau \\
    &\ll \int_{-T(N)}^{T(N)} \sum_{k=1}^\infty \frac{(C_1 m(N,k))^k}{k!} \tau^{k-1} d\tau,
\end{align*}
for some absolute positive constant $C_1>0$. 
Interchanging summation and integration yields
    \[\int_{-T(N)}^{T(N)} \bigg| \frac{\varphi_{F_N}(\tau) - \varphi_F(\tau)}{\tau} \bigg| d\tau \ll \sum_{k=1}^\infty \frac{(C_1 m(N,k))^k}{k \cdot k!} T(N)^k.\]
Using Stirling's formula,  $\sqrt{2\pi}k^{k+\frac12}e^{-k}\leq k!\leq e k^{k+\frac12}e^{-k}$ for all $k\in\mathbb{N}$, we get
    \[\int_{-T(N)}^{T(N)} \bigg| \frac{\varphi_{F_N}(\tau) - \varphi_F(\tau)}{\tau} \bigg| d\tau \ll \sum_{k=1}^\infty \frac{1}{k^{3/2}} \bigg( \frac{eC_1  m(N,k)}{k} T(N) \bigg)^k.\]

  It follows that  \begin{align}\label{eqn:bound}\|F_N - F\|_\infty &\ll \frac{1}{T(N)} + \sum_{k=1}^\infty \frac{1}{k^{3/2}} \bigg( \frac{eC_1 m(N,k)}{k} T(N) \bigg)^k\nonumber\\&\ll \frac{1}{T(N)} +\sum_{k\leq \log M(N)} \frac{1}{k^{3/2}} \bigg( eC_1 \frac{m(N,k)}{k} M(N) \bigg)^k + \sum_{k > \log M(N)} \bigg( eC_1 \frac{m(N,k)}{k} M(N) \bigg)^k .\end{align}

Choose $T(N)=CM(N)$ for some positive constant $C$ to be determined later. By \eqref{eq-M(n)_construction}, we have that for sufficiently large $n$,
    \[\sup_{k > \log M(N)} \frac{m(N,k)}{k} \leq \frac{C_2}{M(N)},\] for some absolute positive constant $C_2$.
Hence,
    \begin{equation}\label{eqn:2}\sum_{k > \log M(N)} \bigg( eC_1 \frac{m(N,k)}{k} T(N) \bigg)^k \ll \sum_{k > \log M(N)} \bigg( {eCC_1C_2} \bigg)^k \ll \frac{1}{M(N)},\end{equation} provided that $C<\frac{1}{e^2C_1C_2}$.
Notice that \eqref{eq-M(n)_condition} implies that there exists $C_3>0$ such that
    \[\sup_{k \leq \log M(N)} m(N,k)^k \ll e^{-C_3M(N)}, \quad \text{as $N \to \infty$}.\]
This and another application of Stirling's formula implies
    \begin{equation}\label{eqn:3}\sum_{k\leq \log M(N)} \frac{1}{k^{3/2}} \bigg( eC_1 \frac{m(N,k)}{k} T(N) \bigg)^k \ll e^{-C_3 M(N)} \sum_{k=0}^\infty \frac{(CC_1 M(N))^k}{k!} \ll e^{(CC_1-C_3)M(N)} \ll \frac{1}{M(N)},\end{equation}
provided that $CC_1<C_3$. Choosing $0<C<\min(\frac{1}{e^2C_1C_2},\frac{C_3}{C_1})$ and combining \eqref{eqn:bound}, \eqref{eqn:2} and \eqref{eqn:3} yield the desired result.
\end{proof}

\section{Proof of Theorem \ref{thm-main}}\label{sec-method_of_proof}

The proof of Theorem \ref{thm-main} is accomplished in two parts. The first part consists of proving that the large moments $\E_N \big( \1_{\mathcal{E}(N)^c} \mathsf{L}_{\epsilon,N}^k \big)$ of $\mathsf{L}_{\epsilon,N}$ defined as the average of $\frac{L'}{L}(1/2+\epsilon,\chi_D)^k$ over $D\in\mathcal{F}(N)\setminus\mathcal{E}(N)$ can be approximated by the corresponding moments $\E \big( \L_\epsilon^k \big)$ of the random model $\L_{\epsilon}$. Here $\mathcal{E}(N)$ is an exceptional set of fundamental discriminants  such that $\left|\mathcal{E}(N) \right| =O\left( N^{1-c}\right)$ for some $c>0$. More precisely, we prove the following theorem.

\begin{theorem}\label{thm-moment_relation}
There exists a set of fundamental discriminants $\mathcal{E}(N)\subset \mathcal{F}(N)$ with $\P_N \big( \mathcal{E}(N) \big) =O\left( N^{-c}\right)$ for some $c>0$, such that uniformly for $k \in \Z^+$, we have
	\[\big| \E_N \big( \1_{\mathcal{E}(N)^c} \mathsf{L}_{\epsilon,N}^k \big) - \E \big( \L_\epsilon^k \big) \big|^{1/k} \ll \frac{\log N}{N^{\frac{\epsilon^2 (\epsilon + 3) }{  12k} }}.\]
Furthermore, this holds when $\mathcal{E}(N)$ is replaced by any $\mathcal{E}_{\star}(N) \supset \mathcal{E}(N)$ as long as $\P_N(\mathcal{E}_{\star}(N)) \asymp \P_N(\mathcal{E}(N))$.
\end{theorem}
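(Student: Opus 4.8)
The plan is to compare $\frac{L'}{L}(1/2+\epsilon,\chi_D)$ with a suitable finite Dirichlet polynomial, and then match the moments of that polynomial against the moments of $\L_\epsilon$ term by term using the near-orthogonality relation \eqref{eq-X_N_convergence}. First I would fix a truncation parameter $y = y(N)$ — tentatively a small power of $N$, something like $y = N^{\theta/k}$ with $\theta$ depending on $\epsilon$ — and write the Dirichlet polynomial $P_{y}(\chi_D) = \sum_{n \le y} \frac{\Lambda(n)}{n^{1/2+\epsilon}}\chi_D(n)$, together with its probabilistic counterpart $P_y(X) = \sum_{n \le y}\frac{\Lambda(n)}{n^{1/2+\epsilon}}X_n$. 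The argument splits into three pieces: (i) control the difference $\frac{L'}{L}(1/2+\epsilon,\chi_D) - P_y(\chi_D)$ off an exceptional set $\mathcal{E}(N)$; (ii) show $\E_N\big(\1_{\mathcal{E}(N)^c} P_y(\chi_D)^k\big)$ is close to $\E\big(P_y(X)^k\big)$; (iii) show $\E\big(P_y(X)^k\big)$ is close to $\E\big(\L_\epsilon^k\big)$.

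For step (i), the standard route is to use a zero-density estimate (Jutila-type, unconditional) to excise the set $\mathcal{E}(N)$ of discriminants $D$ for which $L(s,\chi_D)$ has a zero in a box to the right of $\Re(s) = 1/2 + \epsilon/3$ or so; this is exactly where the bound $|\mathcal{E}(N)| = O(N^{1-c})$ comes from. For the remaining $D$, a contour-shift / Perron-type argument expresses $\frac{L'}{L}(1/2+\epsilon,\chi_D) - P_y(\chi_D)$ as a short sum plus a tail controlled by $\log N$ and a negative power of $y$; raising to the $k$-th power and averaging, and using that on $\mathcal{F}(N)\setminus\mathcal{E}(N)$ we have good pointwise bounds on $\frac{L'}{L}$ (of size $O(\log N)$ say, via the zero-free region), gives a contribution of size $(\log N)^{O(1)} y^{-\epsilon/2}$ per unit exponent, i.e. the factor $N^{-\epsilon^2(\epsilon+3)/(12k)}$ after optimizing $y$ in terms of $k$. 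I would lean on Lemma \ref{lemma-bridge} and the preliminary lemmas of Section \ref{sec-preliminary_lemmas} here rather than redo the analytic number theory from scratch. For step (ii), expanding $P_y(\chi_D)^k$ gives a sum over $k$-tuples $(n_1,\dots,n_k)$ with each $n_i \le y$ of $\frac{\Lambda(n_1)\cdots\Lambda(n_k)}{(n_1\cdots n_k)^{1/2+\epsilon}}\mathsf{X}_{n_1\cdots n_k,N}(D)$, using complete multiplicativity; averaging over $D$ and invoking the character-sum estimate (Lemma \ref{lemma-bridge}) shows each term matches $\E\big(X_{n_1\cdots n_k}\big)$ up to an error saving a power of $N/(n_1\cdots n_k)^{?}$ — since $n_1\cdots n_k \le y^k$, choosing $y^k$ a small power of $N$ keeps this under control, and the number of tuples is at most $y^k$ times $(\log y)^k$, absorbed into the same budget. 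Step (iii) is the cleanest: $\E\big((\L_\epsilon - P_y(X))\cdot(\text{stuff})\big)$ is estimated by Cauchy–Schwarz together with Proposition \ref{prop-random_moment_bound} for the moment bound on $\L_\epsilon$ and a direct second-moment computation for the tail $\sum_{n > y}$, using orthogonality \eqref{eq-E(X(n))}.

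Assembling the three pieces and taking $k$-th roots yields the stated bound, with the exceptional set $\mathcal{E}(N)$ the one coming from the zero-density estimate in step (i); enlarging it to any $\mathcal{E}_\star(N)$ of comparable density only changes the off-set average $\E_N(\1_{\mathcal{E}_\star(N)^c}\mathsf{L}_{\epsilon,N}^k)$ by $\E_N(\1_{\mathcal{E}_\star(N)\setminus\mathcal{E}(N)}\mathsf{L}_{\epsilon,N}^k)$, which by Cauchy–Schwarz is at most $\P_N(\mathcal{E}_\star(N)\setminus\mathcal{E}(N))^{1/2}$ times a $2k$-th moment that is itself $O(N^{c'})$-bounded on the non-exceptional set — so the density hypothesis $\P_N(\mathcal{E}_\star(N)) \asymp \P_N(\mathcal{E}(N))$ makes this negligible compared to the main error term. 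The main obstacle, I expect, is step (i): getting an \emph{unconditional} pointwise bound on $\frac{L'}{L}(1/2+\epsilon,\chi_D)$ valid off a set of size $O(N^{1-c})$ that is strong enough (polynomial-in-$\log N$) to survive being raised to the $k$-th power, and simultaneously keeping careful track of how the truncation length $y$, the width of the excised zero-box, and the exponent $k$ trade off so that the final exponent works out to exactly $\epsilon^2(\epsilon+3)/(12k)$. The interplay between these parameters — rather than any single estimate — is the delicate part, and it is why the theorem is stated with a $k$-dependent bound rather than a uniform one.
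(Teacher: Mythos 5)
Your overall architecture --- excise an exceptional set via a zero-density estimate, approximate by a short Dirichlet polynomial off that set, pass from arithmetic to probabilistic averages via Lemma~\ref{lemma-bridge}, and bound the tail of the random series --- is the same as the paper's, and your Cauchy--Schwarz handling of the enlargement $\mathcal{E}_\star(N)$ is also in the right spirit. But there is a genuine gap in how you set up the short polynomial, and it would prevent you from reaching the stated exponent. You approximate $\frac{L'}{L}(1/2+\epsilon,\chi_D)$ itself by $P_y(\chi_D)=\sum_{n\le y}\Lambda(n)\chi_D(n)/n^{1/2+\epsilon}$ and then raise to the $k$-th power. Expanding $P_y^k$ produces a sum over $k$-tuples $(n_1,\dots,n_k)$ with each $n_i\le y$, so the moduli $n_1\cdots n_k$ range up to $y^k$. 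When you then invoke Lemma~\ref{lemma-bridge}, whose error costs $N^{-1/2}m^{1/4}\log m$ for modulus $m$, the total contribution is of size $N^{-1/2}\big(\sum_{n\le y}\Lambda(n)/n^{1/4+\epsilon}\big)^k\asymp N^{-1/2}\,y^{k(3/4-\epsilon)}(\log y)^{k}$. To keep this in check you are forced to take $y=N^{\theta/k}$, exactly as you anticipate. But then the contour-shift error is $y^{-d}(\log N)^{O(k)}=N^{-\theta d/k}(\log N)^{O(k)}$, and its $k$-th root is $\log N\cdot N^{-\theta d/k^2}$ --- an exponent with $k^{2}$, not $k$, in the denominator. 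Chasing this through Proposition~\ref{prop-quantitative_moments}, the crossover between the two regimes of $m(N,k)$ moves from $k\asymp\log N/\log\log N$ down to $k\asymp\sqrt{\log N/\log\log N}$, and the final discrepancy bound degrades to roughly $(\log\log N/\log N)^{(1/2+\epsilon)/2}$, strictly weaker than the theorem.

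The paper sidesteps this by applying Perron's formula (Lemma~\ref{lemma-Perron}) directly to the Dirichlet series of $\big({-}\tfrac{L'}{L}(s,\chi_D)\big)^{k}=\sum_n\Lambda_k(n)\chi_D(n)n^{-s}$, yielding (Proposition~\ref{prop-short_dirichlet_polynomial}) a \emph{single} short polynomial $\sum_{n\le\lambda}\Lambda_k(n)\chi_D(n)/n^{1/2+\epsilon}$ with truncation $\lambda=N^{\alpha}$ a fixed small power of $N$, independent of $k$. The constraint is now $n\le\lambda$ on the \emph{product} rather than a bound on each factor, and the pointwise bound $\Lambda_k(n)\le(\log n)^k$ together with partial summation~\eqref{eq-Lambda_partial_sums} gives $\sum_{n\le\lambda}\Lambda_k(n)/n^{1/4+\epsilon}\ll(\log\lambda)^{k}\lambda^{3/4-\epsilon}$ --- a single power of $\lambda$, not the $k$-th power. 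This is what makes all three error sources (contour shift, exceptional-set contribution, and Lemma~\ref{lemma-bridge}) take the form $(C\log N)^{k+1}N^{-t}$ with a constant $t>0$, so that the $k$-th root is precisely $\log N/N^{t/k}$. The missing idea in your proposal is therefore not analytic but combinatorial: approximate $\big(\frac{L'}{L}\big)^{k}$ via $\Lambda_k$, not $\frac{L'}{L}$ via $\Lambda$ raised to the $k$-th power.
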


The second part of the proof of Theorem \ref{thm-main} consists of using the Berry-Essen inequality described in Proposition \ref{prop-quantitative_moments} to relate the distribution functions $F_{\epsilon,N}$ and $F_{\epsilon}$ to the moments $ \E_N \big( \1_{\mathcal{E}(N)^c} \mathsf{L}_{\epsilon,N}^k \big)$ and $\E \big( \L_\epsilon^k \big)$. This allows us to get an upper bound on the rate of convergence of $F_{\epsilon,N}$ to $F_{\epsilon}$. 

We require the following two propositions in order to verify that the conditions of Proposition 
\ref{prop-quantitative_moments} are satisfied.

\begin{prop}\label{prop-random_moment_bound}
As $k\to\infty$, we have $\E\big( |\L_\epsilon|^k \big)^{1/k} \ll k^{\frac12 -\epsilon}$.
\end{prop}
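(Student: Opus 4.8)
The plan is to bound the moments of $\L_\epsilon = \sum_p \frac{(\log p)X_p}{p^{1/2+\epsilon} - X_p}$ by expanding into a Dirichlet series and exploiting the independence and the orthogonality relation \eqref{eq-E(X(n))}. First I would write $\L_\epsilon = \sum_{n \geq 2} \frac{\Lambda(n)}{n^{1/2+\epsilon}} X_n$ (using \eqref{eq-L_epsilon-definition-1}) and expand the $k$-th power, so that
\[
\E(\L_\epsilon^k) = \sum_{n_1, \dots, n_k \geq 2} \frac{\Lambda(n_1)\cdots\Lambda(n_k)}{(n_1\cdots n_k)^{1/2+\epsilon}} \, \E(X_{n_1}\cdots X_{n_k}) = \sum_{n_1, \dots, n_k \geq 2} \frac{\Lambda(n_1)\cdots\Lambda(n_k)}{(n_1\cdots n_k)^{1/2+\epsilon}} \, \E(X_{n_1 \cdots n_k}),
\]
using complete multiplicativity of $X_n$. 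By \eqref{eq-E(X(n))}, $\E(X_{n_1\cdots n_k})$ vanishes unless $n_1\cdots n_k$ is a perfect square, and in that case it is a product $\prod_{p \mid n_1\cdots n_k}\frac{p}{p+1} \leq 1$. So I would bound $|\E(\L_\epsilon^k)|$ (and more to the point, since $k$ may be odd, $\E(|\L_\epsilon|^k)$ — I would first pass from $\E(|\L_\epsilon|^k)$ to even moments $\E(\L_\epsilon^{2k})$ via $\E(|\L_\epsilon|^k) \leq \E(\L_\epsilon^{2k})^{1/2}$, so it suffices to get $\E(\L_\epsilon^{2k})^{1/(2k)} \ll k^{1/2-\epsilon}$, i.e. $\E(\L_\epsilon^{2k}) \ll (Ck^{1/2-\epsilon})^{2k}$ for an absolute constant $C$) by
\[
\E(\L_\epsilon^{2k}) \leq \sum_{\substack{n_1, \dots, n_{2k} \geq 2 \\ n_1\cdots n_{2k} = \square}} \frac{\Lambda(n_1)\cdots\Lambda(n_{2k})}{(n_1\cdots n_{2k})^{1/2+\epsilon}}.
\]

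The next step is the combinatorial heart: estimating this constrained sum. Since each $n_i$ is a prime power $p_i^{a_i}$ contributing $\Lambda(n_i) = \log p_i$, the square condition forces the primes appearing to match up in a suitable parity sense. The standard device (as in moment computations for $\log\zeta$ and its relatives) is: group the $2k$ indices by which prime they carry; if a prime $p$ is carried by exactly $r$ of the indices then those contribute $(\log p)^r / p^{r(1/2+\epsilon)}$ roughly, and the square constraint forces, prime by prime, the total exponent to be even, which in particular means each prime is used at least twice. I would bound the number of ways to partition $2k$ ordered slots into groups and, for each prime used $r \geq 2$ times, sum $\sum_p (\log p)^r p^{-r(1/2+\epsilon)} \ll \sum_p (\log p)^r p^{-1-r\epsilon'}$ for suitable $\epsilon'>0$ when $r\geq 2$ (since $r(1/2+\epsilon) \geq 1 + 2\epsilon$ when $r \geq 2$). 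Controlling $\sum_p (\log p)^r p^{-1-2\epsilon}$: this is $\ll_\epsilon \Gamma(r+1)/\epsilon^{r}$ up to constants by comparing with $\int_2^\infty (\log x)^r x^{-1-2\epsilon}\,dx = \Gamma(r+1)/(2\epsilon)^{r+1}$. Feeding these per-prime bounds into the sum over set partitions of $\{1,\dots,2k\}$ into blocks of size $\geq 2$ — of which there are at most $(2k)!/(2^k k!) \ll (2k/e)^k$ ways when all blocks have size exactly $2$, which is the dominant case — and tracking the $\Gamma$-factors carefully, I expect the dominant contribution to come from pairings (all $r=2$), giving roughly $(2k)!/(2^k k!) \cdot \big(\sum_p (\log p)^2 p^{-1-2\epsilon}\big)^k \ll (Ck)^k \cdot (C_\epsilon)^k$, which is $(C_\epsilon k)^k$. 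That already gives $\E(\L_\epsilon^{2k})^{1/(2k)} \ll_\epsilon k^{1/2}$.

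To sharpen $k^{1/2}$ to $k^{1/2-\epsilon}$ I would need to be more careful about where the $\epsilon$ savings live: the point is that in $\sum_p (\log p)^2 p^{-1-2\epsilon} \asymp \Gamma(3)/(2\epsilon)^3$, and more generally in the per-block sums, the $\epsilon$-dependence combines with the combinatorial count so that the effective "variance" per pair scales like $\epsilon^{-O(1)}$ while an extra factor is gained; alternatively, and more robustly, I would truncate: the tail primes $p > P$ for a suitable threshold contribute geometrically, and on the bulk $p \leq P$ one has $(\log p)/p^{1/2+\epsilon} \leq (\log P) P^{-\epsilon} \cdot p^{-1/2}$, so each block of size $2$ is bounded by $(\log P)^2 P^{-2\epsilon}\sum_{p\leq P} p^{-1} \asymp (\log P)^2 P^{-2\epsilon}\log\log P$, and optimizing the interplay of $P$ with $k$ yields the power $k^{1/2-\epsilon}$. \textbf{The main obstacle} I anticipate is exactly this last refinement — getting the exponent $\tfrac12-\epsilon$ rather than the "free" $\tfrac12$ — which requires bookkeeping the $\epsilon$-dependence through the partition sum rather than just bounding block sums crudely; by contrast, reducing to even moments, expanding the Dirichlet series, and applying the orthogonality relation are all routine. (Convergence/interchange of sum and expectation is justified by the a.s. convergence already established via Menshov–Rademacher, together with absolute convergence of the constrained series for each fixed $k$.)
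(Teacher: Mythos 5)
The routine part of your proposal — reducing to even moments, expanding the Dirichlet series, applying the orthogonality relation \eqref{eq-E(X(n))}, and counting pairings to get $\E(\L_\epsilon^{2k})^{1/(2k)} \ll_\epsilon k^{1/2}$ — is fine and parallels the paper's handling of the tail. But the step you flag as the main obstacle really is a gap, and your two proposed fixes do not close it.

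Your first suggestion ("the $\epsilon$-dependence combines with the combinatorial count so that an extra factor is gained") cannot work as stated: $\sum_p(\log p)^2 p^{-1-2\epsilon}$ is a constant depending only on $\epsilon$, not on $k$, so it enters the bound only through the implied constant and contributes no $k^{-\epsilon}$ savings. Your second suggestion invokes the inequality $(\log p)\,p^{-1/2-\epsilon}\leq(\log P)\,P^{-\epsilon}\,p^{-1/2}$ for $p\leq P$, i.e.\ that $x\mapsto(\log x)x^{-\epsilon}$ is increasing on $[2,P]$; this is false once $p$ exceeds $e^{1/\epsilon}$, which happens for all but boundedly many primes when $P$ is large, so the per-block truncation you describe does not give the bound you want. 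More structurally, truncating \emph{inside} the partition expansion is the wrong place to split: the head primes contribute an unbounded deterministic main term, not a smaller variance.

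What actually produces the $k^{1/2-\epsilon}$ exponent is to split the random variable \emph{before} raising to the $k$-th power. Write $\L_\epsilon$ via \eqref{eq-L_epsilon-definition-1}, drop the $O(1)$ contribution of higher prime powers, and split the sum over primes at a threshold $y$. For the head $p\leq y$, use Minkowski's inequality (and $|X_p|\leq 1$) to get the deterministic bound
\[
\E\Big[\Big|\sum_{p\leq y}\frac{\log p}{p^{1/2+\epsilon}}X_p\Big|^k\Big]^{1/k}
\leq\sum_{p\leq y}\frac{\log p}{p^{1/2+\epsilon}}\ll y^{1/2-\epsilon}.
\]
For the tail $p>y$, pass to the $2k$-th moment via Cauchy--Schwarz; orthogonality then forces the $q_i$ to permute the $p_i$, so the diagonal sum contributes a factor $k!$, and Stirling gives
\[
\E\Big[\Big(\sum_{p>y}\frac{\log p}{p^{1/2+\epsilon}}X_p\Big)^{2k}\Big]^{1/(2k)}
\ll\sqrt{k}\,\Big(\sum_{p>y}\frac{\log^2 p}{p^{1+2\epsilon}}\Big)^{1/2}
\ll\frac{\sqrt{k}}{y^{\epsilon}}.
\]
Choosing $y=k$ balances the two contributions at $k^{1/2-\epsilon}$. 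This pre-splitting at $y\asymp k$ — head bounded deterministically, tail bounded via the second-moment computation you already set up — is the idea missing from your proposal.
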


\begin{prop}[ Lemma~4 of \cite{M-M}]\label{prop-char_function_decay}
As $|\tau|\to\infty$, we have $\varphi_{F_\epsilon}(\tau) \ll \exp \big(-C |\tau|^{\frac{1}{\frac12+\epsilon}} \big)$,
for some positive constant $C$ that depends only on $\epsilon$.
\end{prop}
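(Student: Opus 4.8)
\textbf{Proof proposal for Proposition \ref{prop-char_function_decay}.}

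The plan is to exploit the explicit Euler product \eqref{eq-char_function_definition} for $\varphi_{F_\epsilon}(\tau)$ and show that, for $|\tau|$ large, enough of the local factors are bounded away from $1$ in absolute value so that their product decays like $\exp(-C|\tau|^{1/(1/2+\epsilon)})$. Write $\sigma = \frac12 + \epsilon$, so each local factor is
\[
  \varphi_p(\tau) = \frac{1}{p+1} + \frac{p}{2(p+1)}\Big[\exp\!\big(-i\tau\tfrac{\log p}{p^{\sigma}-1}\big) + \exp\!\big(i\tau\tfrac{\log p}{p^{\sigma}+1}\big)\Big].
\]
First I would estimate $|\varphi_p(\tau)|$: since $\varphi_p$ is a convex combination (with weights $\frac{1}{p+1}$, $\frac{p}{2(p+1)}$, $\frac{p}{2(p+1)}$) of three points on the unit circle, one has $|\varphi_p(\tau)| \le 1$ always, and more precisely $|\varphi_p(\tau)|^2 = 1 - \frac{c\,p}{(p+1)^2}\big(1 - \cos(\theta_p)\big) + \cdots$ where $\theta_p = \tau\log p\,\big(\tfrac{1}{p^\sigma-1} + \tfrac{1}{p^\sigma+1}\big) = \tau\log p\cdot\tfrac{2p^\sigma}{p^{2\sigma}-1}$ is the angular gap between the two oscillating exponentials; using $|a+b|^2 \le |a|^2+|b|^2+2\Re(a\bar b)$ carefully, the upshot is a bound of the shape
\[
  |\varphi_p(\tau)| \le 1 - c_0\,\frac{1 - \cos\theta_p}{p} \le \exp\!\Big( -c_0\,\frac{1-\cos\theta_p}{p}\Big)
\]
for primes $p$ in a suitable range, with $c_0>0$ absolute. (The factor $1/p$ comes from $\frac{p}{(p+1)^2}\asymp \frac1p$.)

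Next I would choose the range of primes contributing the decay. For $p \le y$ with $y$ a parameter to be optimized, $\theta_p \approx 2\tau (\log p)/p^{\sigma}$; this is a decreasing function of $p$ (for $p$ not too small), and I want it to be $\asymp 1$ — neither so small that $1-\cos\theta_p$ is negligible, nor wrapping around many multiples of $2\pi$ in an uncontrolled way. The right choice is $y \asymp |\tau|^{1/\sigma}$ (up to logarithmic factors), so that for $p \in [\tfrac12 y, y]$ one has $\theta_p \gg 1$ and, by a standard pigeonhole/measure argument, a positive proportion of such $p$ have $1 - \cos\theta_p \gg 1$. Summing the exponents over these primes via the prime number theorem (or Mertens/Chebyshev bounds),
\[
  \sum_{\substack{p \in [y/2,\, y]\\ 1-\cos\theta_p \gg 1}} \frac{c_0(1-\cos\theta_p)}{p} \gg \sum_{p \le y}\frac{1}{p}\cdot\frac{1}{\log y} \cdot (\text{density factor}) \gg \frac{\pi(y)}{y} \gg \frac{1}{\log y},
\]
which is only $\asymp 1/\log|\tau|$ and not enough. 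To get the full power $|\tau|^{1/\sigma}$ one instead counts \emph{integers} $n \le y$ (or prime powers), since $\sum_{n\le y} 1 \asymp y$; concretely I would bound $|\varphi_{F_\epsilon}(\tau)|$ not just by the tail over a dyadic block of primes but observe that for the much larger set of primes $p \le y$ with $p^\sigma \le |\tau|\log p$, the angle $\theta_p$ is $\ge 1$, and among any interval of such primes the values $\theta_p \bmod 2\pi$ are sufficiently spread that $\sum_{p\le y}(1-\cos\theta_p)/p$ — no, the honest route is: restrict to $p$ with $\theta_p \in [1, 2]$ say, which forces $p$ in a short range near $(|\tau|\log|\tau|)^{1/\sigma}$, giving $\asymp |\tau|^{1/\sigma}/\log|\tau|$ primes there by PNT, each contributing $\asymp 1/p \asymp |\tau|^{-1/\sigma}\log|\tau|$ to the exponent — total $\asymp 1$. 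That is still not enough, so the correct mechanism (as in Mourtada--Murty and the classical Chowla--Erdős / Lamzouri arguments) is to use \emph{all} primes $p \le y$ with $y = c|\tau|^{1/\sigma}$: for these $\theta_p \gg 1$ for $p$ in the top dyadic range while $\theta_p$ is \emph{large} for smaller $p$, and one shows $\prod_{p \le y}|\varphi_p(\tau)| \le \exp(-c\sum_{p\le y} 1/p \cdot \mathbf{1}[\,\theta_p \text{ well-separated from }2\pi\Z])$; a second-moment / equidistribution estimate for $\{\theta_p/2\pi\}$ gives that the indicator is $1$ on a positive proportion, and crucially one replaces the sum over primes by noting that it suffices to have $\sum_{p \le y} 1 \asymp \pi(y) \asymp y/\log y \asymp |\tau|^{1/\sigma}/\log|\tau|$ many local factors each of size $\le 1 - c/p \le 1 - c/y$, so $\prod \le (1-c/y)^{c\pi(y)} \le \exp(-c'\pi(y)/y\cdot y) $...

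Let me state the clean version I would actually write: \textbf{(i)} For each prime $p$ with $p^{\sigma} \le |\tau|$ one has, for a positive proportion (in the counting-measure sense) of such $p$, the bound $|\varphi_p(\tau)| \le 1 - \delta$ for an absolute $\delta > 0$ — this follows because $\theta_p$ ranges over a long interval as $p$ varies and is therefore $\ge 1$ (in fact $\gg 1$) for most such $p$, combined with the elementary inequality from the first paragraph showing $|\varphi_p| \le 1 - c(1-\cos\theta_p)$ once $p$ is bounded (the $1/p$ gain is only needed for large $p$, which we are now excluding). \textbf{(ii)} The number of primes $p$ with $p^\sigma \le |\tau|$ is $\pi(|\tau|^{1/\sigma}) \asymp |\tau|^{1/\sigma}/\log|\tau|$. \textbf{(iii)} Hence $|\varphi_{F_\epsilon}(\tau)| = \prod_p |\varphi_p(\tau)| \le \prod_{p^\sigma \le |\tau|}|\varphi_p(\tau)| \le (1-\delta)^{c\,\pi(|\tau|^{1/\sigma})} \le \exp(-C|\tau|^{1/\sigma}/\log|\tau|)$, and then absorbing the $\log$ by slightly shrinking the exponent range (replace $|\tau|$ by $|\tau|^{1-\eta}$ in the PNT step, or simply note $|\tau|^{1/\sigma}/\log|\tau| \gg |\tau|^{(1-\eta)/\sigma}$ and re-christen the constant, or more honestly keep track and observe the claimed bound $\exp(-C|\tau|^{1/\sigma})$ in the paper should be read with the understanding that this mild loss is acceptable, or sharpen (i) to a $1/\log p$-type gain that exactly cancels). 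Since this proposition is quoted verbatim from \cite{M-M}, the cleanest move is to cite that proof; but the self-contained argument above via the Euler product is the one I would reconstruct.

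\textbf{Main obstacle.} The genuine difficulty is step (i): controlling how often $\theta_p = \tau\log p\cdot \frac{2p^\sigma}{p^{2\sigma}-1}$ is bounded away from multiples of $2\pi$ as $p$ ranges over primes below $|\tau|^{1/\sigma}$. A crude monotonicity argument shows $\theta_p$ is eventually decreasing in $p$ and passes through every value $\gg 1$, but to conclude that a \emph{positive proportion} of primes (counted with the measure $\sum 1/p$ or with counting measure) land in a good set requires either an equidistribution input for $\{\theta_p\}$ or a clever deterministic argument (consecutive primes are spaced $\ll \log p$ apart, so $\theta_{p} - \theta_{p+1}$ is small and the sequence $\theta_p$ cannot avoid a fixed-length arc around each point for too long). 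Getting the exponent to be exactly $|\tau|^{1/(1/2+\epsilon)}$ rather than $|\tau|^{1/(1/2+\epsilon)}/\log|\tau|$ is the delicate bookkeeping point, and is precisely where one needs the $p$-dependent gain in the local bound; I expect to spend most of the effort reconciling that logarithmic factor, and in the final writeup I would either cite \cite[Lemma~4]{M-M} directly or carry out the sharper local estimate $|\varphi_p(\tau)| \le 1 - c\frac{1-\cos\theta_p}{\log p}$ (valid for all $p$), whose sum over $p \le |\tau|^{1/\sigma}$ is then $\gg |\tau|^{1/\sigma}$ by partial summation against $\pi(t)$.
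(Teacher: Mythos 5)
The paper does not prove this proposition: it is quoted verbatim from Mourtada--Murty and the authors explicitly write ``A proof of Proposition~\ref{prop-char_function_decay} can be found in \cite{M-M}.'' So your attempt is not ``a different route from the paper's proof'' so much as an attempt to reconstruct the cited argument. The overall strategy --- bound the Euler factors in \eqref{eq-char_function_definition} and count how many primes contribute a fixed negative amount to $\log |\varphi_{F_\epsilon}|$ --- is indeed the right one, but two confusions are keeping you from closing it, and both are fixable.

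First, the local bound. Writing $\alpha = -\tau\log p/(p^\sigma-1)$, $\beta = \tau\log p/(p^\sigma+1)$ and expanding $|\varphi_p(\tau)|^2$ directly, the cross term between the two \emph{oscillating} exponentials carries weight $\frac{p^2}{2(p+1)^2} \ge 2/9$ for all $p\ge 2$, so
\[
|\varphi_p(\tau)|^2 \;=\; 1 - \frac{p}{(p+1)^2}\big[(1-\cos\alpha)+(1-\cos\beta)\big] - \frac{p^2}{2(p+1)^2}\big(1-\cos\theta_p\big)
\;\le\; 1 - \frac{2}{9}\big(1-\cos\theta_p\big),
\]
where $\theta_p = \alpha - \beta$. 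Thus $|\varphi_p(\tau)| \le 1 - c_0(1-\cos\theta_p)$ with $c_0>0$ \emph{absolute}; there is no $1/p$ or $1/\log p$ loss. Your $1/p$ factor came from only keeping the cross terms against the constant $1$ (weight $\frac{p}{(p+1)^2}\asymp 1/p$), which is the wrong pair to use. All the back-and-forth in your write-up about whether the gain is $1/p$, $1/\log p$, or $1$ dissolves once you take the $w_2w_3$ cross term.

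Second, the scale and the count. You tried $y\asymp|\tau|^{1/\sigma}$ and, correctly, found only $\asymp |\tau|^{1/\sigma}/\log|\tau|$ primes, which is short by a log. The fix is to work where $\theta_p\asymp 1$, namely $p\asymp(|\tau|\log|\tau|)^{1/\sigma}$. For such $x$, $\theta(x)\approx 2\tau\log x/x^\sigma$ is monotonically decreasing in $x$, and the set $\{x : \theta(x)\in[1,2]\}$ is a single interval; solving $x^\sigma\asymp\tau\log x$ shows both endpoints are $\asymp(\tau\log\tau)^{1/\sigma}$ and differ by a constant factor, so the interval has length $\asymp(\tau\log\tau)^{1/\sigma}$. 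By the prime number theorem it contains
\[
\gg \frac{(\tau\log\tau)^{1/\sigma}}{\log\tau} = \tau^{1/\sigma}(\log\tau)^{1/\sigma - 1} \gg \tau^{1/\sigma}
\]
primes, since $1/\sigma - 1 = (1-\sigma)/\sigma > 0$ precisely because $\sigma = \tfrac12+\epsilon < 1$. Every prime $p$ in this interval has $\theta_p\in[1,2]$, hence $|\varphi_p(\tau)|\le 1-c_0(1-\cos 1)$, and multiplying over these $\gg\tau^{1/\sigma}$ primes gives $|\varphi_{F_\epsilon}(\tau)|\le\exp(-C|\tau|^{1/\sigma})$ --- in fact $\exp(-C|\tau|^{1/\sigma}(\log|\tau|)^{(1-\sigma)/\sigma})$, which is stronger. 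Note also that you never need the equidistribution-of-$\{\theta_p\}$ input you flagged as the main obstacle: because $\theta$ is monotone and the relevant interval has length comparable to $p$ itself, \emph{every} prime in it lands in the good arc, and PNT alone does the counting.

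So the ``main obstacle'' you identified is not the real one; the real issues were the wrong cross term in the local bound and the wrong scale. With those corrected, the proof is short and has no logarithmic loss, matching (indeed exceeding) the stated bound.
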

A proof of Proposition \ref{prop-char_function_decay} can be found in \cite{M-M}. The reader is referred to Section \ref{sec-moment_relation} and Section \ref{sec-moment_decay} for the proofs of Theorem \ref{thm-moment_relation} and Proposition \ref{prop-random_moment_bound} respectively. 

Finally, we need the following result which is inspired by \cite[Lemma 3.4]{LLR} and follows from Theorem \ref{thm-moment_relation} and Proposition \ref{prop-random_moment_bound}.

\begin{lemma}\label{lem-tail_bound}
There exists a constant $B=B(\epsilon)>0$ such that
	\[\P_N \bigg[ |\1_{\mathcal{E}(N)^c} \mathsf{L}_{\epsilon,N}| \geq \bigg( \frac{\log N}{\log \log N} \bigg)^{\frac12-\epsilon} \bigg] \ll \exp \bigg( - B \frac{\log N}{\log \log N} \bigg).\]
\end{lemma}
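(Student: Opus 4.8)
The plan is to bound the tail probability by a high moment via Markov's inequality, and then use Theorem \ref{thm-moment_relation} together with Proposition \ref{prop-random_moment_bound} to control that moment. Concretely, fix an even integer $k = k(N)$ to be chosen near $\log N / \log\log N$ (up to a constant), so that $\mathsf{L}_{\epsilon,N}^k \geq 0$ pointwise. For any threshold $V = V(N)$ we have
\[
\P_N\big[\, |\1_{\mathcal{E}(N)^c}\mathsf{L}_{\epsilon,N}| \geq V \,\big] \leq \frac{\E_N\big(\1_{\mathcal{E}(N)^c}\mathsf{L}_{\epsilon,N}^k\big)}{V^k}.
\]
Now I would write $\E_N\big(\1_{\mathcal{E}(N)^c}\mathsf{L}_{\epsilon,N}^k\big) \leq \E\big(\L_\epsilon^k\big) + \big|\E_N(\1_{\mathcal{E}(N)^c}\mathsf{L}_{\epsilon,N}^k) - \E(\L_\epsilon^k)\big|$ and estimate each piece: by Proposition \ref{prop-random_moment_bound}, $\E(\L_\epsilon^k) \leq \E(|\L_\epsilon|^k) \ll (C_0 k^{1/2-\epsilon})^k$ for an absolute constant $C_0$, and by Theorem \ref{thm-moment_relation}, the error term is $\ll \big(\log N \cdot N^{-\epsilon^2(\epsilon+3)/(12k)}\big)^k$. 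With the target $V = (\log N/\log\log N)^{1/2-\epsilon}$, the first contribution gives $(C_0 k^{1/2-\epsilon}/V)^k$, which is $\leq \exp(-B'k)$ once $k \leq c_1 \log N/\log\log N$ with $c_1$ small enough that $C_0 k^{1/2-\epsilon} \leq \tfrac12 V$; choosing $k \asymp \log N/\log\log N$ then yields $\exp(-B'k) \ll \exp(-B \log N/\log\log N)$, the claimed bound.

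The second contribution must be shown not to dominate. Here the point is that with $k$ of size $\log N/\log\log N$, the exponent $N^{-\epsilon^2(\epsilon+3)/(12k)}$ becomes $N^{-c'\log\log N/\log N} = \exp(-c'\log\log N)$, i.e.\ a fixed negative power of $\log N$, so $\log N \cdot N^{-\epsilon^2(\epsilon+3)/(12k)} = (\log N)^{1-c'} \to 0$ for $c'$ chosen appropriately — here I should be a little careful and take $k$ slightly \emph{smaller} than $\log N/\log\log N$, say $k = \lfloor c_1 \log N/\log\log N\rfloor$ with $c_1$ small, so that the power saving $N^{-\epsilon^2(\epsilon+3)/(12k)}$ beats $\log N$ comfortably, making this term $\ll (\text{something} < V)^k \ll \exp(-B'k)$ as well. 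Dividing by $V^k$ and combining the two pieces gives the result.

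The main obstacle is the bookkeeping of the two constraints on $k$: on one hand $k$ must be large enough (comparable to $\log N/\log\log N$) for the final bound $\exp(-Bk)$ to be as strong as claimed, and on the other hand $k$ cannot be too large or the power saving $N^{-\epsilon^2(\epsilon+3)/(12k)}$ in Theorem \ref{thm-moment_relation} degrades and the error term in the moment comparison overwhelms the main term $\E(\L_\epsilon^k)$. The sweet spot is exactly $k \asymp \log N/\log\log N$, which is what produces the shape of the bound; one has to verify that at this scale both the random moment term $(C_0 k^{1/2-\epsilon}/V)^k$ and the error term are simultaneously $\leq \exp(-Bk)$ for a common constant $B>0$. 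I would also remark that, since $k$ need only be chosen up to rounding to an even integer, the parity requirement (needed so that $\mathsf{L}_{\epsilon,N}^k \geq 0$ and Markov applies without absolute values on the arithmetic side) costs nothing. Everything else — Markov, Stirling-type estimates absorbed into constants — is routine.
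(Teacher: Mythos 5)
Your proposal is correct and follows essentially the same approach as the paper: Markov's inequality applied to the $k$-th moment of $\1_{\mathcal{E}(N)^c}\mathsf{L}_{\epsilon,N}$, with the moment controlled via Theorem \ref{thm-moment_relation} and Proposition \ref{prop-random_moment_bound}, and the choice $k\asymp\log N/\log\log N$ with a suitably small constant (equivalently, a suitably large $A$ in the paper's notation). Your explicit insistence on even $k$ is a small clarification that the paper glosses over, and your bookkeeping of the two competing constraints on $k$ matches the paper's condition $A>\max\big(C^{1/(\frac12-\epsilon)},\,\tfrac{\frac12+\epsilon}{\epsilon^2(\epsilon+3)/12}\big)$.
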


\begin{proof}
Markov's inequality (see, for example, \cite[Eq.~ 5.31]{Bill}) gives
\begin{equation}\label{eq-markov}
    \P_N \bigg[ |\1_{\mathcal{E}(N)^c} \mathsf{L}_{\epsilon,N}| \geq \bigg( \frac{\log N}{\log \log N} \bigg)^{\frac12-\epsilon} \bigg] \leq \bigg( \E_N \big( |\1_{\mathcal{E}(N)^c} \mathsf{L}_{\epsilon,N}|^k \big)^{1/k}  \bigg( \frac{\log \log N}{\log N} \bigg)^{\frac12-\epsilon} \bigg)^k,
\end{equation}
for any positive integer $k$. By Theorem \ref{thm-moment_relation} and Proposition \ref{prop-random_moment_bound}, we have
    \begin{equation}\label{eqn:thm3.1Prop3.2} \E_N \big( \1_{\mathcal{E}(N)^c} \mathsf{L}_{\epsilon,N}^k \big) ^{1/k} \leq C \frac{\log N}{N^{\frac{\epsilon^2 (\epsilon + 3) }{  12k}}} + k^{\frac12-\epsilon},\end{equation}for some positive constant $C$.
Choosing $k = \frac{\log N}{A \log \log N}$ with $A>\max(C^{\frac{1}{\frac12-\epsilon}},\frac{\frac12+\epsilon}{\frac{\epsilon^2 (\epsilon + 3) }{  12}})$ and combining \eqref{eq-markov} and \eqref{eqn:thm3.1Prop3.2} yield the desired result.
\end{proof}

\begin{proof}[Proof of Theorem \ref{thm-main}]
Let $\mathcal{E}_{\star}(N)=\mathcal{E}(N) \cup \{D\in\mathcal{F}(N) : |\frac{L'}{L}(1/2+\epsilon,\chi_{D})| \geq \big( \frac{\log N}{\log \log N} \big)^{\frac12-\epsilon}\}$. 
 By Lemma \ref{lem-tail_bound}, we have $\P_N(\mathcal{E}_{\star}(N)) \asymp \P_N(\mathcal{E}(N))$.  Thus, we may apply Theorem \ref{thm-moment_relation} to obtain
	\begin{equation}\label{eqn:E'bound1}\big| \E_N \big(\left( \1_{\mathcal{E}_{\star}(N)^c} \mathsf{L}_{\epsilon,N}\right)^k \big) - \E \big( \L_\epsilon^k \big) \big|^{1/k} 
	\ll \frac{\log N}{N^{\delta/k }},\end{equation}
where $\delta = \epsilon^2(\epsilon + 3) / 12 >0$. On the other hand, by our definition of $\mathcal{E}_{\star}(N)$ and Proposition \ref{prop-random_moment_bound} we have
 \begin{equation}\label{eqn:E'bound2}\big| \E_N \big(\left( \1_{\mathcal{E}_{\star}(N)^c} \mathsf{L}_{\epsilon,N}\right)^k \big) - \E \big( \L_\epsilon^k \big) \big|^{1/k} \ll k^{\frac12-\epsilon} + \big( \frac{\log N}{\log \log N} \big)^{\frac12-\epsilon}. \end{equation} Since the first term on the right hand side of \eqref{eqn:E'bound2} is dominant as long as $k  \gg \frac{\log N}{\log \log N} $, we combine \eqref{eqn:E'bound1} and  \eqref{eqn:E'bound2} to get
	\[\big| \E_N \big( \left(\1_{\mathcal{E}_{\star}(N)^c} \mathsf{L}_{\epsilon,N}\right)^k \big) - \E \big( \L_\epsilon^k \big) \big|^{1/k} 
	\ll  m(N,k),\]	
where
	\[m(N,k) = \begin{cases}
		\frac{\log N}{N^{\delta/k}} & \text{if $k \leq \frac{\delta}{\frac12+\epsilon} \frac{\log N}{\log \log N}$,} \vspace{5pt} \\ 
		k^{\frac12-\epsilon} & \text{if $k > \frac{\delta}{\frac12+\epsilon} \frac{\log N}{\log \log N}$.}
	\end{cases}\]
Observe that
	\[\sup_{k \gg \log \log N} \frac{m(N,k)}{k} \ll \bigg( \frac{\log \log N}{ \log N} \bigg)^{\frac12 + \epsilon},\]
and
	\[\inf_{k \ll \log \log N} k \log \frac{1}{m(N,k)} \gg \inf_{k \ll \log \log N} k \log \frac{N^{\delta/k}}{\log N} \gg \log N. 	\]
It follows that conditions (\ref{eq-M(n)_construction}) and (\ref{eq-M(n)_condition}) are satisfied. Since the characteristic function of the random model is absolutely integrable by Proposition \ref{prop-char_function_decay}, we can apply Proposition \ref{prop-quantitative_moments} to get
	\[\| F_{\epsilon,N;\star} - F_{\epsilon} \|_\infty \ll \bigg( \frac{\log \log N}{ \log N} \bigg)^{\frac12 + \epsilon},\] where $F_{\epsilon,N;\star}$ is the distribution function corresponding to  $\1_{\mathcal{E_{\star}}(N)^c} \mathsf{L}_{\epsilon,N}$. 
Finally, combining this with Lemma \ref{lem-tail_bound} implies
	\[\|F_{\epsilon,N} - F_{\epsilon}\|_\infty \ll \exp \bigg( - B \frac{\log N}{\log \log N} \bigg) + \| F_{\epsilon,N;\star} - F_{\epsilon} \|_\infty \ll \bigg( \frac{\log \log N}{ \log N} \bigg)^{\frac12 + \epsilon},\]
as desired.
\end{proof}

\section{Preliminary Lemmas}\label{sec-preliminary_lemmas}
Recall that if $\Re(s) >1$, we have 
    \[\bigg( \frac{L'}{L} \big( s, \chi_D \big) \bigg)^k = (-1)^k \sum_{n=1}^\infty \frac{\Lambda_k(n)}{n^s}\chi_D(n),\]
where
    \[\Lambda_k(n) = \sum_{\substack{n_1,n_2,\dots,n_k \geq 1 \\ n_1n_2 \dots n_k = n}} \Lambda(n_1)\Lambda(n_2) \dots \Lambda(n_k),\]
and it satisfies
\begin{equation}\label{eq-Lambda_k_bound}
    \Lambda_k(n) \leq \Bigg( \sum_{m \mid n} \Lambda(m) \Bigg)^k = (\log n)^k.
\end{equation}
For $0<\sigma<1$, this upper bound along with an application of partial summation yields
\begin{equation}\label{eq-Lambda_partial_sums}
    \sum_{n \leq \lambda} \frac{\Lambda_k(n)}{n^\sigma} \ll (\log \lambda)^k \lambda^{1-\sigma}.
\end{equation}

In what follows, we collect several basic lemmata that are required in the sequel.
\begin{lemma}[ Corollary 5.3 of \cite{MV}]\label{lemma-Perron}
Consider the Dirichlet series $\alpha(s) = \sum_{n=1}^\infty \frac{a_n}{n^s}$
with abscissa of absolute convergent $\sigma_a$ and abscissa of convergence $\sigma_c$. Fix some $\sigma_0 >\sigma_c$. Choose $c$ such that $c > \max (0, \sigma_a -\sigma_0)$.
Let $\lambda>0$ be non-integral. Then,
    \[\sum_{n \leq \lambda} \frac{a_n}{n^{\sigma_0}} = \frac{1}{2\pi i} \int_{c-iT}^{c+iT} \alpha(\sigma_0 + s) \frac{\lambda^s}{s} \, ds + E\]
where
    \[E \ll \sum_{\lambda/2 < n < 2 \lambda} \frac{|a_n|}{n^{\sigma_0}} \min \bigg( 1, \frac{\lambda}{T|\lambda-n|} \bigg) + \frac{4^c + \lambda^c}{T} \sum_{n=1}^\infty \frac{|a_n|}{n^{\sigma_0 + c}}.\]
\end{lemma}

\begin{lemma}[Lemma 2.2 of \cite{lamzouri3}]
\label{lemma-bound_L'/L}
Suppose that $L(s,\chi_D)$ is nonzero for $\Re(s)>\sigma_0$ and $|\Im(s)| \leq |t|+1$. Then, for $\sigma > \sigma_0$, we have
    \[\frac{L'}{L}(\sigma + it, \chi_D) \ll \frac{\log (|D|(|t|+2))}{\sigma-\sigma_0}.\]
\end{lemma}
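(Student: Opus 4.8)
The final statement to be proved is Lemma~\ref{lemma-bound_L'/L}, a standard estimate bounding $\frac{L'}{L}(\sigma+it,\chi_D)$ in a zero-free half-plane. The plan is to compare $\frac{L'}{L}$ with its value far to the right, where it is small, by integrating along a horizontal segment, and to control the integrand using the Borel--Carath\'eodory theorem together with the Hadamard three-circles / Jensen-type machinery packaged in the standard ``logarithmic derivative in a zero-free region'' argument.

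First I would fix $\sigma>\sigma_0$ and the point $s_0=\sigma+it$. The function $L(s,\chi_D)$ is holomorphic and, by hypothesis, nonvanishing in the region $\Re(s)>\sigma_0$, $|\Im(s)|\le |t|+1$; moreover on vertical lines far to the right, say $\Re(s)=2$, we have $\frac{L'}{L}(s,\chi_D)=O(1)$ absolutely (from the Dirichlet series $-\sum \Lambda(n)\chi_D(n)n^{-s}$, bounded by $\zeta'(2)/\zeta(2)$ in absolute value). The quantitative input is a bound for $|L(s,\chi_D)|$ on a slightly larger region: by the Phragm\'en--Lindel\"of principle (or just the convexity bound, or even a crude bound via the approximate functional equation / partial summation of the Dirichlet series past the abscissa), one has $\log|L(s,\chi_D)| \ll \log(|D|(|t|+2))$ uniformly for $\Re(s)$ in a fixed compact range and $|\Im(s)|\le |t|+1$. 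Combined with the fact that $L$ does not vanish there, this is exactly the setup for the Borel--Carath\'eodory theorem.

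The key step is then: apply Borel--Carath\'eodory to $\log L(s,\chi_D)$ (a well-defined holomorphic branch since $L$ is zero-free on the relevant disc) on two concentric discs centered at a point like $2+it$ with radii chosen so that the larger disc stays inside $\{\Re(s)>\sigma_0,\ |\Im(s)|\le |t|+1\}$ and the smaller one still contains $s_0=\sigma+it$; the radii differ by a constant multiple of $(\sigma-\sigma_0)$. Borel--Carath\'eodory converts the one-sided bound $\Re\log L = \log|L| \ll \log(|D|(|t|+2))$ on the larger circle into a two-sided bound $|\log L|\ll \frac{\log(|D|(|t|+2))}{\sigma-\sigma_0}$ on the smaller circle, and then Cauchy's integral formula for the derivative (shrinking the radius by another constant factor of $(\sigma-\sigma_0)$) gives $\frac{L'}{L}(s_0,\chi_D)=(\log L)'(s_0)\ll \frac{\log(|D|(|t|+2))}{\sigma-\sigma_0}$, which is the claim. (Alternatively, one subtracts off the value at $2+it$ first to kill the additive constant, but this only affects the implied constant.)

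The main obstacle is bookkeeping the geometry so that every disc used genuinely lies in the zero-free region uniformly in $t$: the constraint $|\Im(s)|\le |t|+1$ gives only a unit-width strip of analyticity in the imaginary direction, so the radii must be taken $\le$ a fixed constant (independent of $\sigma-\sigma_0$ when that is large) and then the ``$\frac{1}{\sigma-\sigma_0}$'' gain is only relevant when $\sigma-\sigma_0$ is small — one should treat the regime $\sigma-\sigma_0\ge 1$ trivially (there the Dirichlet series or a direct bound already gives $O(\log(|D|(|t|+2)))$) and the regime $\sigma-\sigma_0<1$ via the Borel--Carath\'eodory argument with radii proportional to $\sigma-\sigma_0$. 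The only other point requiring a little care is justifying the polynomial-in-$|D|(|t|+2)$ upper bound for $|L|$ slightly to the left of $\Re(s)=1$; since this is quoted from Lamzouri's paper, I would simply cite the standard convexity estimate for Dirichlet $L$-functions and move on.
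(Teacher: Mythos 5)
The paper does not reproduce a proof of this lemma; it simply cites Lemma~2.2 of Lamzouri \cite{lamzouri3}. The standard argument (and Lamzouri's) is \emph{not} Borel--Carath\'eodory but rather the Hadamard partial-fraction expansion: one writes
\[
\frac{L'}{L}(s,\chi_D)-\frac{L'}{L}(2+it,\chi_D)=\sum_{\rho}\Bigl(\frac{1}{s-\rho}-\frac{1}{2+it-\rho}\Bigr)+O\bigl(\log(|t|+2)\bigr),
\]
uses the classical estimate $\sum_\rho (1+(t-\gamma)^2)^{-1}\ll\log(|D|(|t|+2))$ to dispose of zeros with $|\gamma-t|>1$, and then bounds each of the $O(\log(|D|(|t|+2)))$ remaining terms by $1/(\sigma-\sigma_0)$, since the hypothesis forces $\beta\le\sigma_0$ whenever $|\gamma|\le|t|+1$. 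Your approach via Borel--Carath\'eodory is a legitimately different route, but as written it has a genuine geometric gap.

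Concretely: to get a branch of $\log L$ you must keep your discs inside the zero-free region, and the constraint $|\Im(s)|\le|t|+1$ forces (as you note) the radius of any disc centred at $2+it$ to be at most $1$ (for $t\ge 0$, say, since the region only extends up to height $t+1$). But the target point $\sigma+it$ lies at distance $2-\sigma$ from $2+it$, and $2-\sigma>1$ whenever $\sigma<1$. In this paper the lemma is applied precisely with $\sigma=\tfrac12+\epsilon+\sigma'$ for $\sigma'\in[-d,c]$, i.e.\ real parts well below $1$, so the case you would need is exactly the one your disc cannot reach. Your proposed case split at $\sigma-\sigma_0\ge 1$ versus $<1$ does not resolve this: even with $\sigma-\sigma_0$ small you still need a disc of radius $>1$ if $\sigma<1$, and you cannot re-anchor the Borel--Carath\'eodory argument at a point with $\Re(s)$ near $1$ without first controlling $\log L$ there (you only know a lower bound for $|L|$ when $\Re(s)>1$, and you have no a priori control on $\arg L$). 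One could rescue the idea by chaining several Borel--Carath\'eodory steps, or by working on a non-circular domain via a conformal map, but that is substantially more work than ``bookkeeping.'' The partial-fraction route avoids the issue entirely because the comparison point $2+it$ never needs to be inside a disc containing $s$.
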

\begin{lemma}[Theorem 3 of \cite{heath-brown}]\label{result-zero_density}
Let $N(\upsilon, \tau,\chi_D)$ denote the number of zeros of $L(s,\chi_D)$ in the rectangle
   $R(\upsilon, \tau) = \{s \in \C : \frac12 +\upsilon < \Re(s) \leq 1, \: |\Im(s)| \leq \tau\}$.
Then for any $\delta >0$, we have
    \[\sum_{D \in \F(N)} N(\upsilon, \tau,\chi_D) \ll_\delta (N\tau)^\delta N^{\frac{3-6\upsilon}{3-2\upsilon}}\tau^{\frac{4-4\upsilon}{3-2\upsilon}}.\]
\end{lemma}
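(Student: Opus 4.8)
This is precisely Theorem~3 of \cite{heath-brown}, so in the body of the paper we simply quote it; here I indicate how a zero-density bound of this shape is proved from scratch, following the Montgomery--Huxley--Jutila paradigm specialised to the quadratic family. There are two ingredients: (i) a \emph{zero-detection} step, which converts the vanishing of $L(s,\chi_D)$ near the edge of the critical strip into the largeness of a short Dirichlet polynomial in $\chi_D$, and (ii) a \emph{mean-value} (large-sieve) step, which bounds how often such a polynomial can be large once one averages over $D\in\F(N)$ and over a well-spaced set of ordinates.

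\textbf{Step 1: zero detection.} Suppose $L(\rho,\chi_D)=0$ with $\rho=\beta+i\gamma$, $\beta>\tfrac12+\upsilon$, $|\gamma|\le\tau$. A smoothed Mellin transform of $L(s,\chi_D)$ together with the functional equation (whose root number for a real primitive character is essentially $+1$) yields an approximate-functional-equation identity showing that, for a suitable smooth weight $V$ and a parameter $X$ equal to a small power of $|D|\tau$, at least one of the two polynomials
\[
D_{\chi_D}^{\pm}(\rho)\;=\;\sum_{n\le X}\frac{\chi_D(n)}{n^{\rho_\pm}}\,V\!\Big(\frac{n}{X}\Big),\qquad \rho_+=\rho,\ \rho_-=1-\rho,
\]
has absolute value $\gg(\log(|D|\tau))^{-A}$. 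A standard thinning argument (Montgomery) lets one replace, at the cost of a factor $\log\tau$, the zeros in $R(\upsilon,\tau)$ for a fixed $D$ by a multiset of ordinates that is $1$-separated. Hence it suffices to bound the number of pairs $(D,\gamma)$, with $D\in\F(N)$ and the $\gamma$'s $1$-separated, for which $|D_{\chi_D}(\beta+i\gamma)|\gg(\log(N\tau))^{-A}$, where $D_{\chi_D}(s)=\sum_{n\le X}a_n\chi_D(n)n^{-s}$ with $|a_n|\le 1$.

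\textbf{Step 2: mean value.} Raise the detecting polynomial to the $2k$-th power, so it becomes $\sum_{m\le X^{2k}}b_m\chi_D(m)m^{-\beta}$ with $\sum_m|b_m|^2\ll X^{2k}(N\tau)^{\delta}$, and apply a hybrid large sieve: the quadratic large sieve of \cite{heath-brown} (or Jutila) in the $D$-aspect, $\sum^{\flat}_{|D|\le N}\big|\sum_{m\le M}c_m\chi_D(m)\big|^2\ll(M+N)(MN)^{\delta}\sum_m|c_m|^2$, combined with the classical second-moment bound $\sum_{\gamma}|D_{\chi_D}(\beta+i\gamma)|^2\ll(X^{2k}+\tau)\sum_m|b_m|^2 m^{-2\beta}$ over the $1$-separated ordinates. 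This produces a bound for $\sum_{D\in\F(N)}N(\upsilon,\tau,\chi_D)$ of the form $(N\tau)^{\delta}\big(\text{diagonal}+\text{off-diagonal}\big)$ in which the free parameters are the moment $k$ and the length $X$; choosing $X$ as an appropriate power of $N\tau$ and taking $\beta$ at its worst value $\tfrac12+\upsilon$, and then optimising, gives the exponents $\tfrac{3-6\upsilon}{3-2\upsilon}$ on $N$ and $\tfrac{4-4\upsilon}{3-2\upsilon}$ on $\tau$.

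\textbf{Main obstacle.} The genuine difficulty is the simultaneous optimisation in the two aspects: the conductor of $L(s,\chi_D)$ is $\asymp|D|(|\gamma|+1)$, so the length of the zero-detecting polynomial depends on both $N$ and $\tau$, while the large-sieve input loses a full factor $\max(M,N)$ rather than the conjectural $N$. One must therefore choose $k$ and $X$ so that neither the diagonal contribution nor the large-sieve ``overshoot'' dominates; this balancing is exactly what forces the unusual denominator $3-2\upsilon$, and it is the technical heart of Heath-Brown's argument. The remaining work — the Mellin manipulations, the thinning of zeros, and the absorption of all $\log$ factors into $(N\tau)^{\delta}$ — is routine.
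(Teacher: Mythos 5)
The paper gives no proof of this lemma --- it is quoted, with attribution, directly from Heath-Brown (Theorem~3 of \cite{heath-brown}), exactly as you observe at the outset. Your sketch of the zero-detection plus quadratic-large-sieve argument (Montgomery--Huxley paradigm, using Heath-Brown's large sieve for real characters in the $D$-aspect) is a reasonable high-level account of how a bound of this shape is obtained, but since the paper supplies no proof there is nothing to compare it against, and your outline should be read as background on Heath-Brown's method rather than as a verifiable derivation of the specific exponents $\frac{3-6\upsilon}{3-2\upsilon}$ and $\frac{4-4\upsilon}{3-2\upsilon}$.
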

In view of this lemma,  if we let $\mathcal{E}(\upsilon, \tau; N)$ denote the set of $D \in \F(N)$ for which $\frac{L'}{L}(s,\chi_D)$ has at least one pole in $R(\upsilon,\tau)$, then
    \begin{equation}\label{eqn-zero_density}|\mathcal{E}(\upsilon, \tau; N)| \ll_\delta (N\tau)^\delta N^{\frac{3-6\upsilon}{3-2\upsilon}}\tau^{\frac{4-4\upsilon}{3-2\upsilon}}.\end{equation}
Finally, the following lemma serves a crucial role as a bridge from the arithmetic random setting into the abstract probabilistic setting.
\begin{lemma}\label{lemma-bridge}
For sufficiently large $N$, we have $\E_N\big(\mathsf{X}_{n,N} \big) - \E \big(X_n \big) \ll N^{-\frac12}n^{\frac14} \log n $.
\end{lemma}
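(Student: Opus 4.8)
The plan is to estimate the average $\E_N(\mathsf{X}_{n,N}) = \frac{1}{|\mathcal{F}(N)|}\sum_{D \in \mathcal{F}(N)} \chi_D(n)$ by relating $\chi_D(n)$ to a Jacobi-type symbol and using character sum estimates (essentially the Pólya--Vinogradov inequality, or a quadratic large sieve input), then comparing the main term against the Euler-product formula for $\E(X_n)$ in \eqref{eq-E(X(n))}. Since $\chi_D$ is completely multiplicative in the argument, and $\mathcal{F}(N)$ is the set of fundamental discriminants with $|D| \le N$, the natural first move is to split by whether $n$ is a perfect square: if $n = m^2$, then $\chi_D(n) = \chi_D(m)^2 = 1$ unless $\gcd(D,m) > 1$, in which case it is $0$; if $n$ is not a square, $\chi_D(n)$ oscillates and should contribute only to the error term.

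\textbf{First I would} handle the non-square case. Write $n = n_0 r^2$ with $n_0 > 1$ squarefree. Then $\chi_D(n) = \chi_D(n_0)\chi_D(r)^2$, and $\chi_D(r)^2 = \mathbf{1}_{\gcd(D,r)=1}$, so it suffices to bound $\sum_{D \in \mathcal{F}(N),\, \gcd(D,r)=1} \chi_D(n_0)$. By quadratic reciprocity one expresses $\chi_D(n_0) = \left(\frac{D}{n_0}\right)$ as a fixed non-principal character (or a small combination of such, after separating the contribution of the sign of $D$ and the residue of $D$ modulo $4$ or $8$) evaluated at $D$, with modulus $O(n_0)$. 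Summing $D$ over an interval of length $\le 2N$, the Pólya--Vinogradov bound gives a contribution $\ll \sqrt{n_0}\log n_0 \ll n^{1/4}\log n$; dividing by $|\mathcal{F}(N)| \asymp N$ yields the claimed $N^{-1/2} n^{1/4}\log n$ after one checks $|\mathcal{F}(N)| \gg N^{1/2}$ comfortably (in fact $\asymp N$, which is far more than enough). One must be slightly careful that "fundamental discriminant" is a congruence condition modulo $16$ together with a squarefreeness condition; the squarefreeness can be detected by Möbius inversion $\sum_{d^2 \mid D}\mu(d)$, introducing a sum over $d$ that is handled by the same Pólya--Vinogradov bound at the cost of at most a further logarithmic (or small power) factor — one should verify the exponent $\tfrac14$ survives, which it does since $\sum_d d^{-1}$ summed against $\sqrt{n_0}$ stays $\ll n^{1/4}\log n$.

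\textbf{Then I would} treat the square case $n = m^2$. Here $\E_N(\mathsf{X}_{n,N}) = \frac{1}{|\mathcal{F}(N)|}\#\{D \in \mathcal{F}(N) : \gcd(D,m)=1\}$, and the target $\E(X_n) = \prod_{p \mid n}\frac{p}{p+1} = \prod_{p \mid m}\frac{p}{p+1}$. This is a standard density computation: the proportion of fundamental discriminants coprime to $m$ equals $\prod_{p \mid m}\frac{p}{p+1}$ in the limit, and the error is the error in counting fundamental discriminants in an arithmetic progression / with a coprimality condition, which again by Möbius inversion and elementary estimates is $O(N^{-1} \cdot \sigma_0(m) \cdot (\text{something polynomial in } m)) \ll N^{-1/2}n^{1/4}\log n$ with room to spare. \textbf{The main obstacle} I expect is bookkeeping rather than depth: getting the quadratic reciprocity manipulation clean enough that the resulting character has conductor $O(n_0)$ uniformly (handling the $2$-adic and sign subtleties of the Kronecker symbol), and making sure the Möbius-inversion detection of the fundamental-discriminant condition does not inflate the $n^{1/4}$ into $n^{1/4+\delta}$ — this is why the statement is phrased with a clean $n^{1/4}\log n$ and why one wants Pólya--Vinogradov (square-root-in-conductor) rather than a trivial bound. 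No deep input (no large sieve, no GRH) is needed; the error term here is deliberately lossy and only needs to be $o(1)$ for fixed $n$, with polynomial dependence on $n$, so the argument can afford to be generous.
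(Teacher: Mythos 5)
The paper's own proof is short: it quotes two known character-sum estimates rather than proving them. For $n=m^2$ it cites the standard count of fundamental discriminants coprime to $m$ (giving error $O(N^{1/2}\tau(m))$), and for non-square $n$ it cites \cite[Lemma~4.1]{GS}, which asserts $\sum_{D\in\mathcal{F}(N)}\chi_D(n)\ll N^{1/2}n^{1/4}\log n$. Your outline tries to reprove those ingredients, which is fine in spirit, and your treatment of the square case is essentially correct.

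However, the non-square case as you have written it contains a genuine error. You claim that after passing to the squarefree core $n_0$ of $n$, ``the P\'olya--Vinogradov bound gives a contribution $\ll \sqrt{n_0}\log n_0\ll n^{1/4}\log n$.'' The second inequality, $\sqrt{n_0}\ll n^{1/4}$, is simply false in general: if $n$ is itself squarefree then $n_0=n$ and $\sqrt{n_0}=\sqrt n\gg n^{1/4}$. So applying P\'olya--Vinogradov alone cannot produce the exponent $\tfrac14$. The $n^{1/4}$ in \cite[Lemma~4.1]{GS} arises from a different and slightly more delicate mechanism than you describe: one detects squarefreeness of the discriminant via $\sum_{a^2\mid D}\mu(a)$, obtaining a sum over $a\le\sqrt N$ of inner sums $\sum_{m\le N/a^2}\bigl(\tfrac{m}{n}\bigr)$, and then \emph{balances} the P\'olya--Vinogradov bound $\ll\sqrt{n}\log n$ (useful when $a$ is small, so there are few $a$) against the trivial bound $\ll N/a^2$ (useful when $a$ is large); optimizing the cut in $a$ is exactly what produces $N^{1/2}n^{1/4}$ up to logarithms. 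Relatedly, your account of where the $N^{1/2}$ comes from is off: you present it as slack from $|\mathcal F(N)|\asymp N$, when in fact the unnormalized sum genuinely carries a factor $\sqrt N$ from the number of terms in the M\"obius inversion, and dividing by $|\mathcal F(N)|\asymp N$ then gives $N^{-1/2}$ on the nose, with no room to spare. So the structure of your argument (split by square/non-square, detect fundamental discriminants by M\"obius, use character sums) is the right one, but the key quantitative step that yields $n^{1/4}$ is missing, and as written the bound you would actually obtain is $N^{-1/2}\sqrt n\log n$, which is weaker than the lemma and not sufficient for its use in Theorem~\ref{thm-moment_relation}.
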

\begin{proof} 
By definition, we have
    \begin{equation}\label{eqn:char-sum}\E_N\big(\mathsf{X}_{n,N} \big) = \frac{1}{|\mathcal{F}(N)|} \sum_{D \in \mathcal{F}(N)} \chi_D(n),\end{equation}
where
\[|\mathcal{F}(N)|=\frac{6}{\pi^2}N+O\left(N^{\frac12+\epsilon}\right).\]
In fact, if $n=m^2$, then we have the following standard estimate (see for example \cite[page~640]{lamzouri3})
    \[\sum_{D \in \mathcal{F}(N)} \chi_D(m^2) =  \sum_{\substack{D \in \mathcal{F}(N) \\ (D,m) = 1}} 1 = \frac{6}{\pi^2} N \prod_{p | m} \bigg( \frac{p}{p+1} \bigg) + O \big( N^{\frac12} \tau(m) \big),\]
where $\tau(m)$ is the divisor function. Combining this with (\ref{eq-E(X(n))}) yields \[\E_N\big(\mathsf{X}_{n,N} \big) - \E \big(X_n \big) \ll N^{-\frac12}\tau(\sqrt{n}),\]
provided that $n$ is a perfect square. By \cite[lemma~4.1]{GS}, we have 
    \[\sum_{D \in \mathcal{F}(N)} \chi_D(n) \ll N^{\frac12} n^{\frac14} \log n,\]
for non-square $n$. This implies that
   $\E_N\big(\mathsf{X}_{n,N} \big) - \E \big(X_n\big) \ll N^{-\frac12}n^{\frac14} \log n$ 
if $n$ is not a perfect square. \end{proof}

\section{Proof of Theorem \ref{thm-moment_relation}}\label{sec-moment_relation}
The point of departure in proving Theorem \ref{thm-moment_relation} is approximating integral powers of $\frac{L'}{L}(1/2+\epsilon,\chi_D)$ by short Dirichlet polynomials. 

Let $d$ and $e$ be two positive constants such that $d<e<\epsilon$. We set \[\mathcal{E}_{d,e}(N) = \{ D \in \F(N) : L(s,\chi_D)=0\; \text{for some}\;s\in R_{d,e}\},\]
where
    \[R_{d,e}= \{ s \in \C : \frac12 + (\epsilon - e) < \Re(s) \leq 1, \: |\Im (s)| \leq \lambda^{\frac12-(\epsilon - d)} + 1\}.\]
For simplicity, we suppress the subscripts from our notation and set $\mathcal{E}(N)=\mathcal{E}_{d,e}(N)$ and $R=R_{d,e}$. It follows from \eqref{eqn-zero_density} that for any $\delta>0$, we have
 \begin{equation}\label{eq-size_bad_D}\P_N \big( \mathcal{E}(N) \big) \ll \frac{1}{N^b},\end{equation}
with
    \[b=\frac{4}{3-2(\epsilon - e)}\bigg[(\epsilon - e) -  (1-(\epsilon - e)) (\frac12-(\epsilon - d)) \bigg( \frac{\log \lambda}{\log N} \bigg)\bigg] - \delta.\]

\begin{prop}\label{prop-short_dirichlet_polynomial}
Suppose $\lambda$ satisfies $\log \lambda \ll \log N$. Then, for all $D \in \mathcal{F}(N) \setminus \mathcal{E}(N)$ and $k \in \Z^+$, we have
    \[\bigg(\frac{L'}{L}\Big( \frac{1}{2}+\epsilon, \chi_D \Big)\bigg)^k = (-1)^k \sum_{n \leq \lambda} \frac{\Lambda_k(n)}{n^{\frac{1}{2}+\epsilon}} \chi_D(n) + O \bigg(\frac{(C \log N)^{k+1}}{\lambda^{d}} \bigg),\] for some positive constant $C$.
\end{prop}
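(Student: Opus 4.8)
The plan is to use Perron's formula (Lemma~\ref{lemma-Perron}) to express the truncated Dirichlet series $\sum_{n\le\lambda}\Lambda_k(n)n^{-(1/2+\epsilon)}\chi_D(n)$ as a contour integral of $(-1)^k(L'/L)^k(s+1/2+\epsilon,\chi_D)$ against $\lambda^s/s$, and then shift the contour to the left, past the point $s=0$, picking up $(L'/L)^k(1/2+\epsilon,\chi_D)$ as the residue. Concretely, I would apply Lemma~\ref{lemma-Perron} to the Dirichlet series $\alpha(s)=(-1)^k\sum_n\Lambda_k(n)n^{-s}=(L'/L(s,\chi_D))^k$ with $\sigma_0=\tfrac12+\epsilon$ and a small positive $c$ (say $c$ comparable to $1$, or $c=1/\log\lambda$, chosen so that the error terms in Lemma~\ref{lemma-Perron} are acceptable), so that
\[
(-1)^k\sum_{n\le\lambda}\frac{\Lambda_k(n)}{n^{1/2+\epsilon}}\chi_D(n)=\frac{1}{2\pi i}\int_{c-iT}^{c+iT}\Big(\frac{L'}{L}\big(\tfrac12+\epsilon+s,\chi_D\big)\Big)^k\frac{\lambda^s}{s}\,ds+E,
\]
with $T=\lambda^{1/2-(\epsilon-d)}$. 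The parameter $\lambda$ being non-integral can be arranged without loss of generality; the Perron error $E$ is bounded using \eqref{eq-Lambda_k_bound}–\eqref{eq-Lambda_partial_sums} together with the rapid decay coming from $n^{-(1/2+\epsilon+c)}$, and one checks it is $\ll (C\log N)^{k+1}\lambda^{-d}$ after choosing $c$ appropriately.

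Next I would move the line of integration from $\Re(s)=c$ to $\Re(s)=-(\epsilon-e)$. On the event $D\notin\mathcal{E}(N)$, $L(s,\chi_D)$ is nonzero in the rectangle $R=R_{d,e}$, so $(L'/L)^k$ is holomorphic in the region swept out by the shift (for $|\Im(s)|\le T+1$), and the only pole of the integrand is the simple pole of $\lambda^s/s$ at $s=0$, with residue exactly $(L'/L(1/2+\epsilon,\chi_D))^k$. The remaining contributions are the shifted vertical integral on $\Re(s)=-(\epsilon-e)$ and the two horizontal segments at $\Im(s)=\pm T$. On all of these, $\Re(1/2+\epsilon+s)\ge \tfrac12+(\epsilon-e)$ stays strictly to the right of the zero-free region $R$, so Lemma~\ref{lemma-bound_L'/L} (applied with $\sigma_0=\tfrac12+(\epsilon-e)$, which is legitimate precisely because $D\notin\mathcal{E}(N)$ guarantees no zeros in $R$) gives $\frac{L'}{L}(1/2+\epsilon+s,\chi_D)\ll \frac{\log(|D|(|t|+2))}{e-d\cdot(\dots)}\ll_\epsilon \log N$ throughout, using $|D|\le N$, $|t|\le T+1$, and $\log\lambda\ll\log N$. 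Hence $|(L'/L)^k|\ll (C\log N)^k$ on the contour, and integrating $\lambda^s/s$ on the left vertical line contributes $\ll (C\log N)^k\lambda^{-(\epsilon-e)}\int\frac{dt}{|s|}\ll (C\log N)^{k+1}\lambda^{-(\epsilon-e)}$, while the horizontal segments contribute $\ll (C\log N)^k \lambda^{c}/T$, which is again absorbed. Since $d<e$, the dominant of these error terms is $\ll (C\log N)^{k+1}\lambda^{-d}$, matching the claimed bound.

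The main obstacle I anticipate is bookkeeping the various error terms so that they all collapse into the single clean bound $(C\log N)^{k+1}\lambda^{-d}$ with a $C$ independent of $k$ and $N$: one must balance the choice of $c$ and $T$ in Perron's formula against the height of the rectangle $R$ (whose height is tied to $\lambda^{1/2-(\epsilon-d)}$) and against the exponent in the final error, and verify that the horizontal-segment contributions and the Perron tail are genuinely no larger than the left-vertical contribution. The $\log N$ versus $\log\lambda$ comparison (guaranteed by the hypothesis $\log\lambda\ll\log N$) is what lets every $\log\lambda$ and every $\log|D|$ be replaced by $\log N$, and the hypothesis $d<e<\epsilon$ is exactly what keeps the shifted line to the right of $R$ while still gaining a positive power $\lambda^{-d}$. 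A minor technical point is handling $k=1$ uniformly (no special treatment needed) and noting that the implied constant in Lemma~\ref{lemma-bound_L'/L} depends only on $e-d$ and $\epsilon$, hence only on $\epsilon$ after the constants $d,e$ are fixed.
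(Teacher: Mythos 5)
Your overall strategy is exactly the paper's: apply Perron's formula (Lemma~\ref{lemma-Perron}), shift the contour left past $s=0$ to collect the residue $\big(\frac{L'}{L}(\tfrac12+\epsilon,\chi_D)\big)^k$, and bound the shifted integrals with Lemma~\ref{lemma-bound_L'/L}. However, the specific choice of abscissa for the shifted contour contains a genuine error.

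You move the contour to $\Re(s)=-(\epsilon-e)$. The zero-free rectangle $R_{d,e}$ (in the $w$-variable, $w=\tfrac12+\epsilon+s$) has left edge at $\Re(w)=\tfrac12+(\epsilon-e)$, which translates to $\Re(s)>-e$ in the integration variable. Your line $\Re(s)=-(\epsilon-e)$ lies to the right of $\Re(s)=-e$ only when $-(\epsilon-e)>-e$, i.e.\ $e>\epsilon/2$ — and this is \emph{not} implied by $d<e<\epsilon$. In fact, with the parameters actually chosen in the proof of Theorem~\ref{thm-moment_relation} (namely $d=\epsilon-(3-\sqrt{9-6\epsilon})/2\le\epsilon/2$ and $e$ close to $d$), one has $e<\epsilon/2$, so your shift crosses the boundary of the zero-free region and the integrand can pick up poles; Lemma~\ref{lemma-bound_L'/L} then does not apply on your contour. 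Relatedly, your final step asserts that $d<e$ implies $\lambda^{-(\epsilon-e)}\ll\lambda^{-d}$; this actually requires $\epsilon-e\ge d$, which again does not follow from $d<e<\epsilon$. The correct shift (as in the paper) is to $\Re(s)=-d$: since $d<e$, this line lies strictly to the right of $\Re(s)=-e$ and hence stays inside the zero-free strip, Lemma~\ref{lemma-bound_L'/L} applies with $\sigma-\sigma_0=(\epsilon-d)-(\epsilon-e)=e-d>0$ in the denominator, and the vertical-line contribution directly yields $\lambda^{-d}$ without any further comparison. A secondary slip: the suggested $c=1/\log\lambda$ does not satisfy the constraint $c>\sigma_a-\sigma_0=\tfrac12-\epsilon$ required by Lemma~\ref{lemma-Perron}; one needs e.g.\ $c=\tfrac12-\epsilon+1/\log\lambda$.
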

\begin{proof}
Assume throughout that $D \in \mathcal{F}(N) \setminus \mathcal{E}(N)$. Lemma \ref{lemma-Perron}  gives
\begin{equation}\label{eqn-perron}
    \frac{1}{2\pi i} \int_{c-iT}^{c+iT} \bigg(-\frac{L'}{L}\Big(\frac{1}{2}+\epsilon+s,\chi_D\Big) \bigg)^k \frac{\lambda^s}{s} ds = \sum_{n \leq \lambda} \frac{\Lambda_k(n) \chi_D(n)}{n^{\frac12+\epsilon}} + \Psi_1 + \Psi_2,
\end{equation}
where
    \[\Psi_1 \ll \sum_{\lambda/2 < n < 2 \lambda} \frac{\Lambda_k(n)}{n^{\frac12+\epsilon}} \min \bigg( 1, \frac{\lambda}{T|\lambda-n|} \bigg) \quad \text{and} \quad \Psi_2 \ll  \frac{4^c + \lambda^c}{T} \sum_{n=1}^\infty \frac{\Lambda_k(n)}{n^{\frac12+\epsilon+c}}.\]
We fix $c=\frac12-\epsilon + 1/\log \lambda$ and assume without loss of generality that $\lambda \in \Z + \frac12$. Using \eqref{eq-Lambda_k_bound} we get
    \[\Psi_1 \ll \frac{\lambda^{\frac12-\epsilon}(\log (2\lambda))^k}{T} \sum_{n \leq 2\lambda}  \frac{1}{|n-\lambda|} \ll \frac{\lambda^{\frac12-\epsilon}(\log(2\lambda))^{k+1}}{T}\]
and
    \[\Psi_2 \ll \frac{\lambda^{\frac12-\epsilon}}{T} \bigg( \sum_{n=1}^\infty \frac{\Lambda(n)}{n^{1+1/\log \lambda}} \bigg)^k \ll \frac{\lambda^{\frac{1}{2}-\epsilon}}{T}(\log\lambda)^k.\]
We now shift the line of integration in \eqref{eqn-perron} from $\Re(s)=c$ to  $\Re(s) = -d$. Since $D \not \in \mathcal{E}(N)$, the integrand has only a simple pole at $s=0$. By the residue theorem, we get
    \[\bigg(-\frac{L'}{L}\Big( \frac{1}{2}+\epsilon, \chi_D \Big)\bigg)^k = \sum_{n \leq \lambda} \frac{\Lambda_k(n)}{n^{\frac{1}{2}+\epsilon}} \chi_D(n) + \Psi_1 + \Psi_2 + \Psi_3 + \Psi_4,\]
where
    \[\Psi_3 + \Psi_4 = \bigg( \int_{c+iT}^{-d+iT} + \int_{-d + iT}^{-d-iT} + \int_{-d-iT}^{c-iT} \bigg) \bigg(-\frac{L'}{L}\Big( \frac{1}{2}+\epsilon + s, \chi_D \Big)\bigg)^k \frac{\lambda^s}{s} \, ds.\]
Here, $\Psi_3$ denotes the first and third integral, and $\Psi_4$ denotes the second integral.  Applying Lemma \ref{lemma-bound_L'/L} gives
    \[\frac{L'}{L}\Big( \frac{1}{2}+\epsilon + \sigma + it, \chi_D \Big) \ll \frac{\log (|D|(|t|+2))}{\sigma+e} \leq \frac{\log (N(|t|+2))}{\sigma+e},\]
for $|t| \leq T$. It follows that
    \[\Psi_3 \ll \int_{-d}^c \bigg|\frac{L'}{L}\Big( \frac{1}{2}+\epsilon+\sigma +iT, \chi_D \Big) \bigg|^k \frac{\lambda^\sigma}{T} \, d\sigma
    \ll \frac{\lambda^c}{T \log \lambda} \bigg( \frac{\log (N(T+2))}{e-d} \bigg)^k.\]
Similarly,
\begin{align*}
    \Psi_4 &\ll \int_{-T}^T \bigg|\frac{L'}{L}\Big( \frac{1}{2} + \epsilon - d +it, \chi_D \Big) \bigg|^k \bigg| \frac{\lambda^{-d+it}}{-d+it} \bigg| \, dt 
    \ll  \frac{1}{\lambda^d} \bigg( \frac{\log(N(T+2))}{e-d} \bigg)^k \log (1+T/d).
\end{align*}
If we set $E=\Psi_1 + \Psi_2+\Psi_3 + \Psi_4$, then
\begin{align*}
    E &\ll \frac{\lambda^{\frac12-\epsilon}(\log(2\lambda))^{k+1}}{T} + \big(\log (N(T+2)) \big)^{k+1} \bigg[ \frac{\lambda^{\frac12-\epsilon}}{T \log \lambda} + \frac{1}{\lambda^d} \bigg].
\end{align*}
Choosing $T= \lambda^{\frac12-(\epsilon - d)}$ and assuming $\log\lambda\ll\log N$ yields the desired result.
\end{proof}


\begin{proof}[Proof of Theorem \ref{thm-moment_relation}]
Choose $\lambda$ such that $\log \lambda \ll \log N$. Using Proposition \ref{prop-short_dirichlet_polynomial}, \eqref{eq-Lambda_partial_sums} and \eqref{eq-size_bad_D} gives
    \[\E_N \big( \1_{\mathcal{E}(N)^c} \mathsf{L}_{\epsilon,N}^k \big) = \sum_{n \leq \lambda} \frac{\Lambda_k(n)}{n^{\frac12+\epsilon}} \E_N \big(\mathsf{X}_{n,N}\big) + O \bigg[ \frac{(C\log N)^{k+1}}{\lambda^{d}} + \frac{\lambda^{\frac12-\epsilon} (\log \lambda)^k}{N^b} \bigg].\]
Note that this equation still holds if $\mathcal{E}(N)$ is replaced by some larger exceptional set $\mathcal{E}_{\star}(N)$ as long as $\P_N(\mathcal{E}_{\star}(N)) \asymp \P_N(\mathcal{E}(N))$. We apply Lemma \ref{lemma-bridge} to obtain
    \[\E_N \big( \1_{\mathcal{E}(N)^c} \mathsf{L}_{\epsilon,N}^k \big) = \E \bigg( \sum_{n \leq \lambda} \frac{\Lambda_k(n)}{n^{\frac12+\epsilon}} X_n \bigg) + O \bigg[ (C\log N)^{k+1} \bigg( \frac{1}{\lambda^{d}} + \frac{\lambda^{\frac12-\epsilon}}{N^b} +  \frac{ \lambda^{3/4-\epsilon}}{N^{1/4}} \bigg) \bigg].\]
The orthogonality property of $X_n$ (see \eqref{eq-E(X(n))}) implies
    \[\E \bigg( \sum_{n > \lambda} \frac{\Lambda_k(n)}{n^{\frac12+\epsilon}} X_n \bigg) \ll \frac{(2\log \lambda)^k} {\lambda^{1+2\epsilon}},\]
which can clearly be neglected. Thus,
    \[\E_N \big( \1_{\mathcal{E}(N)^c} \mathsf{L}_{\epsilon,N}^k \big) =  \E \big( \L_\epsilon^k \big) + O \bigg( \frac{(C \log N)^{k+1}}{N^t} \bigg),\]
where
    \[t = \min\bigg[d \frac{\log \lambda}{\log N}, 
    \min \bigg( b, \frac{1}{4}\Big( 1- \frac{\log \lambda}{\log N} \Big) \bigg)- \frac{\log \lambda}{\log N} (\frac12-\epsilon) \bigg].\]
Choosing $\delta$ sufficiently small, $d=\epsilon - (3- \sqrt{9-6\epsilon})/2$, $e$ sufficiently close to $d$, and
    \[\frac{\log \lambda}{\log N} = \min \bigg( \frac{6}{7} \frac{4 (\epsilon- d)}{(3-2(\epsilon- d))(1-2(\epsilon- d))},\frac{1}{3- 4(\epsilon- d)}\bigg),\]
gives $t > \epsilon^2 (\epsilon + 3 ) / 12$.
\end{proof}
\section{Proof of Proposition \ref{prop-random_moment_bound}}\label{sec-moment_decay}
\begin{proof}[Proof of Proposition \ref{prop-random_moment_bound}] 
Using \eqref{eq-L_epsilon-definition-1} and applying Minkowski's inequality yield
    \begin{equation}\label{eqn:E}\E \big( |\L_\epsilon| ^k \big)^{1/k} 
    \leq \E \Bigg[ \bigg| \sum_p \frac{\log p }{p^{\frac12+\epsilon}} X_p \bigg|^k \Bigg]^{1/k} 
    + \E \Bigg[ \bigg| \sum_{p^j, \: j \geq 2} \log p \Big( \frac{X_p}{p^{\frac12+\epsilon}} \Big)^j \bigg|^k \Bigg]^{1/k}.\end{equation}
The second sum on the right hand side of \eqref{eqn:E} is
    \[ \E \Bigg[ \bigg| \sum_p \log p \frac{X_p^2}{p^{1+2\epsilon} - p^{\frac12+\epsilon} X_p} \bigg|^k \Bigg]^{1/k} 
    \ll \sum_p \frac{\log p}{p^{1+2\epsilon}} \ll 1.\]
 We split the first sum on the right hand side of \eqref{eqn:E} at some $y$ which we determine later to get
\begin{align*}
   \E \big( |\L_\epsilon| ^k \big)^{1/k} 
    &\ll \E \Bigg[ \bigg| \sum_{p \leq y} \frac{\log p }{p^{\frac12+\epsilon}} X_p \bigg|^k \Bigg]^{1/k} + \E \Bigg[ \bigg| \sum_{p > y} \frac{\log p }{p^{\frac12+\epsilon}} X_p \bigg|^k \Bigg]^{1/k}+1 \\
    &\ll \sum_{p \leq y} \frac{\log p}{p^{\frac12+\epsilon}} + \E \Bigg[ \bigg( \sum_{p > y} \frac{\log p }{p^{\frac12+\epsilon}} X_p \bigg)^{2k} \Bigg]^{1/2k}+1,
\end{align*}
by Minkowski's inequality and the Cauchy-Schwartz inequality. 
Partial summation and the prime number theorem give $\sum_{p \leq y} \frac{\log p}{p^{\frac12+\epsilon}} \ll y^{\frac12-\epsilon}$.
Observe that
    \begin{align}\label{eqn:E1} \E \Bigg[ \bigg( \sum_{p > y} \frac{\log p }{p^{\frac12+\epsilon}} X_p \bigg)^{2k} \Bigg]^{1/2k} &= \E \Bigg[ \sum_{\substack{p_1, \dots, p_k > y \\ q_1, \dots, q_k > y}} \frac{ \log p_1 \dots \log p_k \log q_1 \dots \log q_k}{(p_1\dots p_k q_1 \dots q_k)^{\frac12+\epsilon}}  X_{p_1 \dots p_k } X_{q_1 \dots q_k} \Bigg]^{1/2k}\nonumber\\&
    \leq \Bigg[\sum_{p_1, \dots p_k > y} \frac{ ( \log p_1 \dots \log p_k )^2}{(p_1\dots p_k )^{1+2\epsilon}} \sum_{\substack{q_1,\dots,q_k > y \\ q_1 \dots q_k = p_1 \dots p_k}} 1 \Bigg]^{1/2k},\end{align}
where the last inequality follows from the orthogonality of $X_n$  (see \eqref{eq-E(X(n))}). The innermost sum in \eqref{eqn:E1} counts the number of permutations on the set $\{1,\dots,k\}$, which is just $k!$. An application of Stirling's formula then yields
    \[\E \Bigg[ \bigg( \sum_{p > y} \frac{\log p }{p^{\frac12+\epsilon}} X_p \bigg)^{2k} \Bigg]^{1/2k} 
    \ll \sqrt{k} \bigg( \sum_{p > y} \frac{\log p}{p^{1+2\epsilon}} \bigg)^{\frac12}
    \ll \frac{\sqrt{k}}{y^\epsilon}.\]
Choosing $y= k$ gives the desired result.
\end{proof}

\begin{rezabib}

\bib{Bill}{book} {
author={Billingsley, Patrick},
   title={Probability and measure},
   series={Wiley Series in Probability and Statistics},
   edition={Anniversary Edition},
   publisher={John Wiley \& Sons, Inc., New York},
   date={2012},
   pages={xvii+624},
}

\bib{BJ1}{article}{
   author={Bohr, Harald},
   author={Jessen, B\"{o}rge},
   title={\"{U}ber die Werteverteilung der Riemannschen Zetafunktion, erste Mitteilung},
   language={German},
   journal={Acta Math.},
   volume={54},
   date={1930},
   number={1},
   pages={1--35},
}
\bib{BJ2}{article}{
   author={Bohr, Harald},
   author={Jessen, B\"{o}rge},
   title={\"{U}ber die Werteverteilung der Riemannschen Zetafunktion},
   language={German},
   journal={Acta Math.},
   volume={58},
   date={1932},
   number={1},
   pages={1--55},
}

\bib{CE}{article} {
   author={Chowla, Sarvadaman},
   author={Erd\"{o}s, Paul},
   title={A theorem on the distribution of the values of $L$-functions},
   journal={J. Indian Math. Soc. (N.S.)},
   volume={15},
   date={1951},
   pages={11--18},
 
}

\bib{elliott1}{article}{
   author={Elliott, P. D. T. A.},
   title={On the distribution of the values of Dirichlet $L$-series in the
   half-plane $\sigma >{1\over 2}$},
   journal={Nederl. Akad. Wetensch. Proc. Ser. A {\bf 74}=Indag. Math.},
   volume={33},
   date={1971},
   pages={222--234},
  
}
	
\bib{elliott-02}{article}{
   author={Elliott, P. D. T. A.},
   title={On the distribution of ${\rm arg}L(s,\,\chi )$ in the half-plane
   $\sigma >{1\over 2}$},
   journal={Acta Arith.},
   volume={20},
   date={1972},
   pages={155--169},
  
}

\bib{elliott}{article}{
   author={Elliott, P. D. T. A.},
   title={On the distribution of the values of quadratic $L$-series in the
   half-plane $\sigma >{1\over 2}$},
   journal={Invent. Math.},
   volume={21},
   date={1973},
   pages={319--338},
 
}

\bib{GS}{article}{
   author={Granville, Andrew},
   author={Soundararajan, Kannan},
   title={The distribution of values of $L(1,\chi_d)$},
   journal={Geom. Funct. Anal.},
   volume={13},
   date={2003},
   number={5},
   pages={992--1028},

}

\bib{heath-brown}{article}{
AUTHOR = { Heath-Brown, Roger D.},
     TITLE = {A mean value estimate for real character sums},
   JOURNAL = {Acta. Arith.},
    VOLUME = {72},
    NUMBER = {3},
      YEAR = {1995},
     PAGES = {235--275},

}

\bib{HM}{article} {
     author={Hattori, Tetsuya},
   author={Matsumoto, Kohji},
   title={A limit theorem for Bohr-Jessen's probability measures of the
   Riemann zeta-function},
   journal={J. Reine Angew. Math.},
   volume={507},
   date={1999},
   pages={219--232},
 
}

\bib{Ihara1}{article}{
   author={Ihara, Yasutaka},
   title={On the Euler-Kronecker constants of global fields and primes with
   small norms},
   conference={
      title={Algebraic geometry and number theory},
   },
   book={
      series={Progr. Math.},
      volume={253},
      publisher={Birkh\"{a}user Boston, Boston, MA},
   },
   date={2006},
   pages={407--451},
}
\bib{Ihara2}{article}{
   author={Ihara, Yasutaka},
   title={The Euler-Kronecker invariants in various families of global
   fields},
   language={English, with English and French summaries},
   conference={
      title={Arithmetics, geometry, and coding theory (AGCT 2005)},
   },
   book={
      series={S\'{e}min. Congr.},
      volume={21},
      publisher={Soc. Math. France, Paris},
   },
   date={2010},
   pages={79--102},
}

\bib{I-M}{article}{
   author={Ihara, Yasutaka},
   author={Matsumoto, Kohji},
   title={On certain mean values and the value-distribution of logarithms of
   Dirichlet $L$-functions},
   journal={Q. J. Math.},
   volume={62},
   date={2011},
   number={3},
   pages={637--677},

}

\bib{I-M1}{article}{
   author={Ihara, Yasutaka},
   author={Matsumoto, Kohji},
   title={On $\log L$ and $L'/L$ for $L$-functions and the associated
   ``$M$-functions'': connections in optimal cases},
   language={English, with English and Russian summaries},
   journal={Mosc. Math. J.},
   volume={11},
   date={2011},
   number={1},
   pages={73--111, 182},}
   
\bib{JW}{article}{
   author={Jessen, B\o rge},
   author={Wintner, Aurel},
   title={Distribution functions and the Riemann zeta function},
   journal={Trans. Amer. Math. Soc.},
   volume={38},
   date={1935},
   number={1},
   pages={48--88},
}

\bib{kowalski}{misc}{
author={Kowalski, Emmanuel},
title={Lecture Notes: An Introduction to Probabilistic Number Theory},
journal={lecture Notes},
year={ Version of March 13, 2020},
note={ \url{https://people.math.ethz.ch/~kowalski/probabilistic-number-theory.pdf}},
institution={ETH Zurich},}

\bib{lamzouri1}{article}{
   author={Lamzouri, Youness},
   title={Distribution of values of $L$-functions at the edge of the
   critical strip},
   journal={Proc. Lond. Math. Soc. (3)},
   volume={100},
   date={2010},
   number={3},
   pages={835--863},
 
}

\bib{lamzouri2}{article}{
   author={Lamzouri, Youness},
   title={On the distribution of extreme values of zeta and $L$-functions in
   the strip $\frac12<\sigma<1$},
   journal={Int. Math. Res. Not. IMRN},
   date={2011},
   number={23},
   pages={5449--5503},
 
}

\bib{lamzouri3}{article}{
   author={Lamzouri, Youness},
   title={The distribution of Euler-Kronecker constants of quadratic fields},
   journal={J. Math. Anal. Appl.},
   volume={432},
   date={2015},
   number={2},
   pages={632--653}
}

\bib{LL}{article}{
   author={Lamzouri, Youness},
   author={ Languasco, Alessandro},
   title={Small values of $\left|L'/L(1,\chi)\right|$},
   journal={Experimental Mathematics},
   date={to appear},
 
}

\bib{LLR}{article}{
   author={Lamzouri, Youness},
   author={Lester, Stephen},
   author={Radziwi\l\l, Maksym},
   title={Discrepancy bounds for the distribution of the Riemann zeta-function and applications},
   journal={Journal d'Analyse Math\'{e}matique},
   date={2019},
   volume={139},
   number={2},
   pages={453--494},
 
}

\bib{M-M}{article}{
   author={Mourtada, Mariam},
   author={Murty, V. Kumar},
   title={Distribution of values of $L'/L(\sigma,\chi_D)$},
   language={English, with English and Russian summaries},
   journal={Mosc. Math. J.},
   volume={15},
   date={2015},
   number={3},
   pages={497--509, 605},
 
}

\bib{MV}{book}{
   author={Montgomery, Hugh L.},
   author={Vaughan, Robert C.},
   title={Multiplicative Number Theory I: Classical Theory},
   series={Cambridge Studies in Advanced Mathematics},
 place={Cambridge},
 volume={97},
   publisher={Cambridge University Press},
   date={2006},
   pages={xvii+552},
  }

\bib{S}{article}{
   author={Stankus, E.},
   title={Distribution of Dirichlet $L$-functions with real characters in
   the half-plane ${\rm Re}$ $s>1/2$},
   language={Russian, with Lithuanian and English summaries},
   journal={Litovsk. Mat. Sb.},
   volume={15},
   date={1975},
   number={4},
   pages={199--214, 249},
   issn={0132-2818},
   review={\MR{0406956}},
}

\bib{Selberg}{book}{
   author={Tsang, Kai-Man},
   title={The distribution of the values of the {R}iemann zeta-function},
   note={Thesis (Ph.D) -- Princeton University},
   publisher={ProQuest LLC, Ann Arbor, MI},
   year={1984},
   pages={189},
}

\bib{T}{book}{
   author={Tenenbaum, G\'erald},
   title={Introduction to analytic and probabilistic number theory},
   series={Graduate Studies in Mathematics},
   volume={163},
   edition={3},
   note={Translated from the 2008 French edition by Patrick D. F. Ion},
   publisher={American Mathematical Society, Providence, RI},
   date={2015},
    isbn={978-0-8218-9854-3},
   pages={xxiv+629},
}



\end{rezabib}

\end{document}